\documentclass{amsart}

\usepackage{url}
\usepackage{mathrsfs}

\usepackage{color}
\usepackage{graphicx}
\usepackage{caption}
\usepackage{subcaption}
\usepackage{multirow}
\usepackage{mathtools}
\usepackage{amssymb}
\usepackage{ctable}

\newcommand{\idl}{\mbox{Ideal}}
\newcommand{\re}{\mathbb{R}}

\newcommand{\cpx}{\mathbb{C}}

\newcommand{\F}{\mathbb{F}}
\newcommand{\N}{\mathbb{N}}

\newcommand{\lmd}{\lambda}

\newcommand{\eps}{\epsilon}

\newcommand{\dt}{\delta}
\newcommand{\Dt}{\Delta}
\def\af{\alpha}

\def\rank{\mbox{rank}}

\newcommand{\reff}[1]{(\ref{#1})}

\newcommand{\mc}[1]{\mathcal{#1}}
\newcommand{\mt}[1]{\mathtt{#1}}

\newcommand{\st}{\mathit{s.t.}}

\newcommand{\bdes}{\begin{description}}
	\newcommand{\edes}{\end{description}}
\newcommand{\bal}{\begin{align}}
\newcommand{\eal}{\end{align}}
\newcommand{\bnum}{\begin{enumerate}}
	\newcommand{\enum}{\end{enumerate}}
\newcommand{\bit}{\begin{itemize}}
	\newcommand{\eit}{\end{itemize}}
\newcommand{\bea}{\begin{eqnarray}}
\newcommand{\eea}{\end{eqnarray}}
\newcommand{\be}{\begin{equation}}
\newcommand{\ee}{\end{equation}}
\newcommand{\baray}{\begin{array}}
	\newcommand{\earay}{\end{array}}
\newcommand{\bsry}{\begin{subarray}}
	\newcommand{\esry}{\end{subarray}}
\newcommand{\bca}{\begin{cases}}
	\newcommand{\eca}{\end{cases}}
\newcommand{\bcen}{\begin{center}}
	\newcommand{\ecen}{\end{center}}
\newcommand{\bbm}{\begin{bmatrix}}
	\newcommand{\ebm}{\end{bmatrix}}
\newcommand{\bmx}{\begin{matrix}}
	\newcommand{\emx}{\end{matrix}}
\newcommand{\bpm}{\begin{pmatrix}}
	\newcommand{\epm}{\end{pmatrix}}
\newcommand{\btab}{\begin{tabular}}
	\newcommand{\etab}{\end{tabular}}

\newtheorem{theorem}{Theorem}[section]

\newtheorem{prop}[theorem]{Proposition}
\newtheorem{lem}[theorem]{Lemma}

\newtheorem{cor}[theorem]{Corollary}

\newtheorem{defi}[theorem]{Definition}
\theoremstyle{definition}
\newtheorem{example}[theorem]{Example}
\newtheorem{exm}[theorem]{Example}
\newtheorem{alg}[theorem]{Algorithm}

\setcounter{equation}{0} \setcounter{subsection}{0}
\numberwithin{equation}{section}

\setcounter{MaxMatrixCols}{15}

\begin{document}

\title{Loss Functions for Finite Sets}

\author[Jiawang Nie]{Jiawang~Nie}
\address{Jiawang Nie, Department of Mathematics,
University of California San Diego,
9500 Gilman Drive, La Jolla, CA 92093, USA.}
\email{njw@math.ucsd.edu}

\author[Suhan Zhong]{Suhan~Zhong}
\address{Suhan Zhong, Department of Mathematics, 
Texas A\&M University, College Station, TX 77843-3368, USA.}
\email{suzhong@tamu.edu}

\subjclass[2020]{90C23,65K05,90C30}

\keywords{loss function, finite set,
polynomial, spurious minimizer, optimization}

\begin{abstract}
This paper studies loss functions for finite sets.
For a given finite set $S$,
we give sum-of-square type loss functions of minimum degree.
When $S$ is the vertex set of a standard simplex,
we show such loss functions have
no spurious minimizers (i.e., every local minimizer is a global one).
Up to transformations, we give similar loss functions
without spurious minimizers for general finite sets.
When $S$ is approximately given by a sample set $T$,
we show how to get loss functions by
solving a quadratic optimization problem.
Numerical experiments and applications are given
to show the efficiency of these loss functions.
\end{abstract}

\maketitle

\section{Introduction}

This paper studies loss functions for finite sets.
The questions of concerns are: for a finite set,
how do we construct a convenient loss function for it?
When does the loss function have no spurious optimizers,
i.e., every local optimizer is also a global one?
We discuss these topics in this paper.
Let $n,k$ be positive integers.
Suppose $S$ is a set of $k$ distinct points in the
$n$-dimensional real Euclidean space $\mathbb{R}^n$.
A function $f$ in $x:=(x_1,\ldots,x_n)$ is
said to be a loss function for $S$ if
the global minimizers of $f$
are precisely the points in $S$.
For convenience, we often select $f$
such that $f$ is nonnegative in $\re^n$
and the minimum value is zero.
Mathematically, this is equivalent to that
\begin{equation}
\label{eq:def:LF:exact}
f(x)=0 \quad \mbox{if and only if} \quad
 x\in S.
\end{equation}
When $S = \{u_1, \ldots, u_k \}$,
a straightforward choice for the loss function is
$f = \|x-u_1\|^2 \cdots \| x - u_k\|^2$,
where $\| \cdot \|$ is the standard Euclidean norm.
This loss function is a polynomial of degree $2k$ in the variable $x$.
It requires to use all points of $S$.
In applications, the cardinality $k$ may be big.
Moreover, the set $S$ often has noises and
it may be given by a large number of samplings
around the points in $S$.
For this reason, the above choice of loss function
may not be convenient in computational practice.

A frequently used loss function is the class of
sum-of-squares (SOS) polynomials. That is, the loss function
$f$ is in the form
\[
f = p_1^2+\cdots+p_m^2,
\]
where each $p_i$ is a polynomial in $x$.
Then $f$ is a loss function for $S$
if and only if each $p_i \equiv 0$ on $S$.
For convenience of computation, we prefer that
$f$ and each $p_i$ have degrees as low as possible.
A more preferable function is that every local minimizer of
$f$ is a global minimizer (i.e., a zero of $f$).
That is, we wish that the loss function $f$
has no {\textit{spurious minimizer}.}\footnote{
A local minimizer that is not a global minimizer is
called a spurious minimizer.}
Optimization without spurious minimizers
is studied in \cite{LassSpur20,LiCaiWei19}.
Polynomial loss functions have good mathematical properties
and are convenient computationally
(see \cite{BabbPolyLoss,GonzalezPolyLF,HuberLoss}).
In particular, polynomial optimization problems
(especially nonconvex ones) can be efficiently solved
by Moment-SOS relaxations. We refer to
\cite{TEiCP,Las01,LasBk15,LasICM,LaurentSOSmom2009,LauICM,locmin,NYZ21}
for recent work on polynomial optimization.

In applications, the set $S$ may not be given explicitly.
It is often approximately given by a sample set
\[
T = \{v_1,\ldots,v_N\},
\]
where each $v_i$ is a sample for a point in $S$
and the sample size $N \gg k$. For such a case,
we can choose a family $\mc{F}$ of loss functions,
which is parameterized to represent the set $S$.
Since $S$ is approximated by $T$,
we choose a loss function $f \in \mc{F}$
such that the average value of $f$ on $T$ is minimum.
Mathematically, this is equivalent to solving the optimization
\begin{equation}
\label{eq:def:LF:inexact}
\min_{f \in \mc{F} }\quad
\frac{1}{N}\sum_{i=1}^N f(v_i) .
\end{equation}
The optimization \reff{eq:def:LF:inexact} requires that
we choose parameters for $f$ such that the average loss on
$T$ is minimum. The set $S$ can be determined by parameters
for $f$ in the family $\mc{F}$.

Loss functions are useful in data science optimization.
There are broad applications of loss functions
\cite{RobustLossFuncBarron,ChengTriLF,Christoff04,GMMwTD21,YHKo05,
LowRankTensorApp17,Schorfheide00,Sudre17,WuShamsu04}.
Selection of loss functions needs to consider application purposes and data structures.
There are various types of loss functions for different applications.
We refer to the survey \cite{WangLossSurvey}
for loss functions in machine learning.
Polynomial loss functions are used in optimal control \cite{Ichihara09,Ito21}.
Linear loss functions are used for network blocking games \cite{LasSze12}.
Loss functions obtained via statistical averaging are given in \cite{BeyAliBew20}.
For inverted beta loss functions, their properties and applications are given in \cite{LeungSpir02}.
Some properties of Erlang loss functions are given in \cite{Jagerman74}.
Properties of correntropic loss functions are given in \cite{SyedPar14}.

\subsection*{Contributions}

The paper studies loss functions for finite sets.
We focus on the SOS type loss functions with minimum degrees.
Let $S$ be a given finite set in $\mathbb{R}^n$.
We characterize loss functions that satisfy \reff{eq:def:LF:exact}.
When $S$ is approximately given by a set $T$ of larger cardinality,
we look for loss functions by
solving the optimization \reff{eq:def:LF:inexact}.
Let $x \coloneqq (x_1,\ldots,x_n)$.
We consider the loss function $f$ such that
$f = p_1^2+\cdots+p_m^2$,
where every $p_i$ is a polynomial in $x$.
The $f$ is a loss function for $S$
if and only if $S$ precisely consists of
common real zeros of polynomials $p_1, \ldots, p_m$.
Mathematically, this is equivalent to that
\be \label{S=commonzeros}
S = \{v\in\re^n: p_1(v) = \cdots = p_m(v) = 0\}.
\ee
For the polynomial $p_i$ to have minimum degrees,
we consider {\it generating polynomials} for the $S$,
which are introduced for symmetric tensor decomposition
\cite{GPandTensorDecomp,LowRankTensorApp17}.
Let $\Phi$ be the set of all generating polynomials for $S$.
It is interesting to note that $\Phi$ has the minimum degree,
such that \reff{S=commonzeros} holds. In particular,
when $S$ is given by vertices of a standard simplex,
the resulting loss function $f$ does not have spurious minimizers.
Up to transformations, we can get loss functions
without spurious minimizers, for general finite sets.
In computational practice, we choose such loss functions of degree four.

When the set $S$ is approximately given by a set $T$ of larger size,
we propose to solve the optimization \reff{eq:def:LF:inexact}
to get the loss function. Equivalently, we determine parameters
for $f$ from a family $\mc{F}$ of loss functions of $S$.
Each $f \in \mc{F}$ is determined by a set of parameters,
and vice versa. By solving \reff{eq:def:LF:inexact},
we not only get a loss function,
but also get a set $S^*$ of $k$ points
that are approximations for the points in $S$.
Once $S^*$ is determined, up to transformations, we can use $S^*$
to get loss functions that have no spurious minimizers.

In summary, our major results are:
\begin{itemize}
	
\item For a given finite set $S$, we give an SOS type
loss function of minimum degree,
such that $S$ is precisely the set of global minimizers.

\item When $S$ consists of the vertices of a standard simplex,
we show that the selected loss function has no spurious minimizers.
For more general finite sets, we give these loss functions
by applying transformations.

\item When the set $S$ is approximately given by a sample set $T$,
we solve the optimization \reff{eq:def:LF:inexact}
to get loss functions of similar properties, 
i.e., they are in SOS type
and have minimum degrees.
\end{itemize}

The paper is organized as follows.
In Section~\ref{sec:prelim}, we briefly review some
backgrounds for polynomial ideals.
In Section~\ref{sec:genframe}, we show how to
get SOS type loss functions for finite sets, with desired properties.
In Section~\ref{sec:standard}, when the set $S$ consists of
vertices of a standard simplex, we show that
the constructed loss functions have no spurious minimizers.
For more general $S$, we show how to get similar loss functions
by applying transformations. In Section~\ref{sc:inexactS},
we show how to get loss functions when the set $S$ is approximately given by a sample set $T$.
Some numerical experiments are given in Section~\ref{sc:num}.

\section{Preliminaries}
\label{sec:prelim}

\subsection*{Notation}

The symbol $\re$ (resp., $\cpx$, $\N$) denotes
the set of real (resp., complex, nonnegative integer) numbers respectively.
The symbol $\mathbb{N}^n$ (resp., $\mathbb{R}^n$, $\mathbb{C}^n$)
stands for the set of $n$-dimensional vectors with entries in $\mathbb{N}$
(resp., $\mathbb{R}$, $\mathbb{C}$) respectively.
For an integer $k>0$, $[k]:=\{1,\cdots,k\}$.
We use $\mathtt{0}$ to denote the vector of all zeros and
$e$ to denote the vector of all ones.
The symbol $e_i$ stands for the unit vector such that
the $i$th entry is one and all other entries are zeros.
For a vector $v$, the $\|v\|$ denotes its Euclidean norm.
For a vector $u\in\mathbb{R}^n$ and $\dt \ge 0$,
$B(u,\dt)  \coloneqq  \{x\in\re^n:\|x-u\|\le \dt\}$
denotes the closed ball centered at $u$ with radius $\dt$.
The symbol $I_n$ denotes the $n$-by-$n$ identity matrix.
The superscript $^T$ (resp., $^\mathtt{H}$)
denotes the operation of matrix transpose (resp., Hermitian).
A square matrix $A$ is said to be positive semidefinite
(resp., positive definite) if $x^TAx\ge 0$
(resp., $x^TAx>0$) for all nonzero vectors $x$.
For two square matrices $X,Y$ of the same dimension,
their commutator is
\[
[X, Y] \, \coloneqq  \, XY-YX.
\]
That is, $X$ commutes with $Y$ if and only if $[X,Y]=0$.
For a function $f$ which is continuously differentiable in $x=(x_1,\ldots, x_n)$,
the $\nabla f$ denotes its gradient in $x$
and $\nabla^2 f$ denotes its Hessian.

Let $\F = \re$ or $\cpx$.
Denote by $\F[x] \coloneqq \F[x_1,\ldots,x_n]$ the ring of polynomials in
$x \coloneqq (x_1,\ldots,x_n)$ with coefficients in $\mathbb{F}$.
For every $d\in\mathbb{N}$, $\mathbb{F}[x]_d$
denotes the subspace of $\mathbb{F}[x]$
which contains all polynomials of degree at most $d$.
For every $\alpha=(\alpha_1,\ldots,\alpha_n)\in\mathbb{N}^n$,
denote the monomial
$
x^{\alpha} \coloneqq x_1^{\alpha_1}\cdots x_n^{\alpha_n} .
$
Its total degree is  $|\alpha| \coloneqq \alpha_1+\cdots+\alpha_n$.

A subset $I\subseteq \mathbb{F}[x]$ is an ideal of $\mathbb{F}[x]$
if $p\cdot q\in I$ for all $p\in I$, $q\in \mathbb{F}[x]$,
and $p_1+p_2\in I$ for all $p_1,p_2\in I$. For an ideal $I$,
its {\it radical} is the set
\[
\sqrt{I} \coloneqq \{f\in \F[x]:\,
f^k \in I\,\text{for some}\,\,  k \in\mathbb{N}\}.
\]
The set $\sqrt{I}$ is also an ideal and $I\subseteq \sqrt{I}$.
The ideal $I$ is said to be radical if $I=\sqrt{I}$.
Each ideal $I$ determines the {\it variety} in $\mathbb{F}^n$ as
\[
V_{\mathbb{F}}(I) \coloneqq \{x\in\mathbb{F}^n:\, p(x)=0\,(p\in I)\}.
\]
For a polynomial tuple $p \coloneqq (p_1,\ldots,p_m)$, we similarly denote that
\[
V_{\mathbb{F}}(p) \, \coloneqq \, \{x\in \F^n:\, p(x)=0\} .
\]
The tuple $p$ generates the ideal
\[
\idl(p)  \coloneqq  p_1\cdot \mathbb{F}[x]+\cdots+p_m\cdot\mathbb{F}[x] .
\]
Clearly, $V_{\mathbb{F}}(\idl(p))=V_{\mathbb{F}}(p)$.

For a set $S\subseteq \mathbb{C}^n$, its vanishing ideal is
\[
I(S)  \coloneqq  \{q\in\mathbb{C}[x]:\, q(u)=0\,( u\in S)\}.
\]
If $S=V_{\mathbb{C}}(p)$ for some polynomial tuple $p$ in $x$,
then $\idl(p)\subseteq I(S)$ but the equality may not hold.
For every $I\subseteq\mathbb{C}[x]$, we have $I(V_{\mathbb{C}}(I))=\sqrt{I}$.
This is Hilbert's Nullstellensatz \cite{IdealVarAlgBook}.

For a given ideal $I\subseteq \mathbb{C}[x]$,
it determines an equivalence relation $\sim$ on $\mathbb{C}[x]$
such that $p\sim q$ if $p-q\in I$, or equivalently, $p\equiv q\mod I$.
Then every $p\in\mathbb{C}[x]$ corresponds to
an equivalence class with the module of $I$, i.e.,
\[
[p]=\{q\in\mathbb{C}[x]:\, q\equiv p\mod I\}.
\]
The set of all equivalent classes is the quotient ring
\[
\mathbb{C}[x]/I  \coloneqq  \{[p]:\,p\in\mathbb{C}[x]\}.
\]

\section{A class of loss functions}
\label{sec:genframe}

In this section, we give a general framework of
constructing loss functions for finite sets.
For convenience, we assume the finite sets are real.
Suppose $S \subseteq \mathbb{R}^n$ is a finite set of cardinality $k$, say,
\[
S \,= \, \{u_1,\ldots, u_k\}.
\]
A function $f$ is a loss function for $S$ if and only if
the global minimizers of $f$ are precisely the points of $S$.
In computational practice,
we often consider the sum-of-squares loss functions
\begin{equation}
\label{eq:Fsos}
f \,= \, p_1^2+\cdots+p_m^2,
\end{equation}
where each $p_i$ is a polynomial in $x$. Denote the tuple
\[
p \,= \, (p_1,\ldots, p_m).
\]
Without loss of generality, one can assume that
the minimum value of $f$ is zero, up to shifting of a constant.
Note that $f(x)=0$ if and only if $p(x)=0$.
Therefore, $f$ is a loss function for $S$ if and only if
\begin{equation}
\label{eq:S=V(p)}
S = \{ x \in \mathbb{R}^n : \, p_1(x)=\cdots=p_m(x)=0 \}.
\end{equation}
The above observation gives the following lemma.

\begin{lem}
\label{lm:f=sumofp}
Let $S, f$ be as above.
Then $f$ is a loss function for $S$
if and only if $S$ is the real zero set of $p$,
i.e., $S = V_{\re}(p)$.
\end{lem}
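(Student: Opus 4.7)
The plan is to reduce the claim to the basic observation that, because $f$ is a sum of squares, the zero set of $f$ in $\re^n$ coincides with the set of its global minimizers, so the ``loss function'' condition on $f$ translates directly to a condition on $V_\re(p)$. First I would record that $f = p_1^2 + \cdots + p_m^2 \ge 0$ pointwise on $\re^n$, and that $f(x) = 0$ holds if and only if $p_1(x) = \cdots = p_m(x) = 0$, i.e., if and only if $x \in V_\re(p)$.

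Next I would handle the two directions of the equivalence. For the backward direction, assume $S = V_\re(p)$. Since $S$ is nonempty (it has $k \ge 1$ points), $f$ attains $0$ on $S$, and because $f \ge 0$, the minimum value of $f$ on $\re^n$ equals $0$ and its set of global minimizers is exactly $\{x \in \re^n : f(x)=0\} = V_\re(p) = S$, so $f$ is a loss function for $S$. For the forward direction, assume $f$ is a loss function for $S$ in the sense of \reff{eq:def:LF:exact}, so that the minimum value of $f$ is $0$ and the global minimizers of $f$ are precisely the points of $S$; then $S = \{x \in \re^n : f(x)=0\} = V_\re(p)$, giving \reff{eq:S=V(p)}.

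There is really no substantive obstacle here: the statement is a direct unpacking of the definition of $V_\re(p)$ together with the nonnegativity of a sum of squares, and in fact the argument has already been sketched informally in the paragraph preceding the lemma. The only point that deserves a line of care is the normalization that the minimum value of $f$ is $0$; this is either built into the selection of $f$ (as the authors note just before \reff{eq:Fsos}) or is automatic as soon as one knows $V_\re(p)$ is nonempty, which is forced by $S \ne \emptyset$ on either side of the equivalence.
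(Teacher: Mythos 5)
Your proof is correct and follows the same route as the paper, which justifies the lemma by exactly the observation that $f\ge 0$ as a sum of squares and $f(x)=0$ if and only if $p(x)=0$, so the global minimizer set of $f$ (under the normalization that its minimum value is zero) is $V_{\re}(p)$. Your extra remark about the normalization being automatic once $V_{\re}(p)\ne\emptyset$ is a fair clarification but not a departure from the paper's argument.
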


The existence of $p$ such that $S = V_{\re}(p)$ is obvious.
For instance, one can choose $p_i$ to be a product like
\[
(x_{j_1}-(u_1)_{j_1}) \cdot (x_{j_2}-(u_2)_{j_2})
\cdots (x_{j_k}-(u_k)_{j_k})  ,
\]
for all possible $j_1,\ldots, j_k \in \{1, \ldots, n\}$.
However, for such a choice of $p$, each $p_i$ has degree $k$
and $f$ has degree $2k$. The degree is high if the cardinality $k$ is big,
and there are $n^k$ such products.
This is not practical in applications. In particular, if the set
$S$ is approximately given by a sample set of large size,
then the resulting $p$ is not convenient for usage.
In applications, people prefer loss functions of low degrees.

In the following, we show how to choose
a computationally efficient loss function for $S$.
Let $\mathbb{B}_0$ be the set of first $k$ vectors in the nonnegative power set
$\N^n$, in the graded lexicographic ordering, i.e.,
\begin{equation}
\label{def:B0}
\mathbb{B}_0  \coloneqq  \Big\{
\underbrace{\mathtt{0},\,e_1,\,\ldots,\,e_n,\, 2e_1,\, e_1+e_2,
\ldots,}_{\mbox{first $k$ of them}}
\Big\}.
\end{equation}
Then, we consider the set
\begin{equation}
\label{def:B1}
\mathbb{B}_1 \coloneqq \Big(
 (e_1+\mathbb{B}_0)\cup\cdots\cup (e_n+\mathbb{B}_0)
\Big) \setminus \mathbb{B}_0 .
\end{equation}
For convenience of notation, denote the monomial vectors
\[
[x]_{\mathbb{B}_0} \coloneqq \big( x^\af \big)_{ \af \in  \mathbb{B}_0 }, \quad
[x]_{\mathbb{B}_1} \coloneqq \big( x^\af \big)_{ \af \in  \mathbb{B}_1 } .
\]
Since $S$ is a finite set of cardinality $k$,
we wish to select $\mathbb{B}_0$ so that the set of equivalent classes
of monomials in $\{x^{\beta}: \beta\in\mathbb{B}_0\}$ is a basis for
the quotient space $\mathbb{R}[x]/I(S)$,
where $I(S)$ is the vanishing ideal of $S$.
This requires that $x^{\alpha}$ ($\alpha\in\mathbb{B}_1$)
is a linear combination of monomials $x^{\beta}\,(\beta\in \mathbb{B}_0)$,
modulo $I(S)$. Equivalently, there exist scalars $G(\beta,\alpha)$ such that
\begin{equation}
\label{eq:varphiGa}
\varphi[G,\alpha](x) \coloneqq x^{\alpha}-
\sum_{\beta\in\mathbb{B}_0}G(\beta,\alpha)x^{\beta}
\equiv 0  \mod \, I(S)
\end{equation}
for each $\af \in \mathbb{B}_1$. Let
$G  \coloneqq (G(\beta,\alpha))\in \re^{\mathbb{B}_0\times\mathbb{B}_1}$
be the matrix of all such scalars $G(\beta,\alpha)$.
The polynomial $\varphi[G,\alpha]$
has coefficients that are linear in entries of $G$.
For convenience, denote that
\be   \label{mat:X0X1}
\boxed{
\begin{aligned}
\varphi[G] &= \big(\varphi[G,\alpha] \big)_{\alpha\in\mathbb{B}_1}, \\
X_0 &= \big[ [u_1]_{\mathbb{B}_0} \quad \cdots \quad  [u_k]_{\mathbb{B}_0} \big] ,\\
X_1 &= \big[ [u_1]_{\mathbb{B}_1} \quad \cdots \quad  [u_k]_{\mathbb{B}_1} \big] .
\end{aligned}
}
\ee
The $X_0$ is a square matrix, which is nonsingular
if the points in $S$ are in generic positions.
For $\varphi[G]$ to vanish on $S$,
the equation \reff{eq:varphiGa} implies that
\[
X_1-G^TX_0 = 0.
\]
If $X_0$ is nonsingular, then the matrix $G$ is given as
\begin{equation}
\label{eq:def:G}
G = X_0^{-T}X_1^T.
\end{equation}

We look for conditions on $G$ such that $\varphi[G]$
has $k$ common zeros in $\cpx^n$. For each $i=1,\ldots,n$,
define the multiplication matrix $M_{x_i}(G)$ such that
\begin{equation}
\label{def:M_xMatrix}
[M_{x_i}(G)]_{\mu,\nu}=\left\{
\begin{array}{ll}
1 & \text{if}\quad x_i\cdot x^{\nu}\in\mathbb{B}_0,\,\mu=\nu+e_i,\\
0 & \text{if}\quad x_i\cdot x^{\nu}\in\mathbb{B}_0,\,\mu\not=\nu+e_i,\\
G(\mu,\nu+e_i) & \text{if}\quad x_i\cdot x^{\nu}\in\mathbb{B}_1.
\end{array}
\right.
\end{equation}
The rows and columns of $M_{x_i}(G)$
are labelled by monomial powers $\mu,\nu \in \mathbb{B}_0$.
The following proposition characterizes
when $\varphi[G]$ has $k$ common zeros.

\begin{prop}\label{pro:MG:com}
(\cite[Proposition 2.4]{GPandTensorDecomp})
Let $\mathbb{B}_0,\,\mathbb{B}_1$ be as in (\ref{def:B0})-(\ref{def:B1}).
Then, the polynomial tuple $\varphi[G]$ has $k$ common complex zeros
(counting multiplicities) if and only if
the multiplication matrices $M_{x_1}(G),\ldots, M_{x_n}(G)$ commute, i.e.,
\be \label{eq:Mcommu}
[M_{x_i}(G),M_{x_j}(G)] = 0\quad (1\le i < j\le n) .
\ee
In particular, $\varphi[G]$ has $k$ distinct complex zeros if and only if
$M_{x_1}(G),\ldots, M_{x_n}(G)$ are simultaneously diagonalizable.
\end{prop}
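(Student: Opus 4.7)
The plan is to recognize this statement as a version of Stickelberger's theorem, specialized to the parametric border-basis framework. By construction, the matrix $M_{x_i}(G)$ encodes the linear operator ``multiplication by $x_i$, followed by reduction of any resulting border monomial $x^{\nu+e_i}\in\mathbb{B}_1$ using the rewriting rule $\varphi[G,\nu+e_i]\equiv 0$.'' So the proposition is saying that these $n$ rewriting operators agree with ordinary multiplication on a well-defined $k$-dimensional quotient ring precisely when they commute, and the common zeros of $\varphi[G]$ are recovered as joint eigenvalues.

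For the forward direction, assume $\varphi[G]$ has $k$ common zeros in $\cpx^n$ counted with multiplicity. Then $\idl(\varphi[G])$ is a zero-dimensional ideal with $\dim_{\cpx} \cpx[x]/\idl(\varphi[G]) = k$. The defining relations $\varphi[G,\alpha]=0$ express every border monomial $x^{\alpha}$ ($\alpha\in\mathbb{B}_1$) as a $\cpx$-linear combination of $\{x^\beta:\beta\in\mathbb{B}_0\}$ modulo the ideal; since $\mathbb{B}_0$ is graded-lex-closed under taking divisors, induction on the graded-lex order propagates this to all monomials, so $\{[x^\beta]\}_{\beta\in\mathbb{B}_0}$ spans the quotient. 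A dimension count makes it a basis, and the matrices of ordinary multiplication by $x_1,\ldots,x_n$ in this basis are exactly $M_{x_1}(G),\ldots,M_{x_n}(G)$. They commute because they come from the commutative ring $\cpx[x]/\idl(\varphi[G])$.

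For the converse, assume the $M_{x_i}(G)$ pairwise commute. I would first verify that the assignment $x_i\mapsto M_{x_i}(G)$ extends to a well-defined $\cpx$-algebra homomorphism $\Psi:\cpx[x]\to \cpx^{k\times k}$; this is where commutation is essential, as it ensures that monomials can be evaluated by applying the $M_{x_i}(G)$ in any order. Next, I would check inductively on the graded-lex order that, for each $\alpha\in\mathbb{B}_1$, the column of $\Psi(x^\alpha)$ indexed by $\mathtt{0}$ equals $\sum_{\beta\in\mathbb{B}_0} G(\beta,\alpha)$ times the corresponding column of $\Psi(x^\beta)$, so that $\Psi(\varphi[G,\alpha])$ annihilates the vector $e_{\mathtt{0}}$; extending this to all columns by multiplying by further $M_{x_j}(G)$ and using commutation shows $\Psi(\varphi[G,\alpha])=0$ and hence $\idl(\varphi[G])\subseteq\ker\Psi$. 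The commuting tuple $\{M_{x_i}(G)^T\}$ shares a common eigenvector $\xi\in\cpx^k$ with eigenvalues $\lambda_1,\ldots,\lambda_n$, and the functional $p\mapsto \xi^T\Psi(p)\,e_{\mathtt{0}}$ coincides with evaluation at $\lambda=(\lambda_1,\ldots,\lambda_n)$, forcing $\varphi[G](\lambda)=0$. To count zeros with multiplicity, I would apply Stickelberger: the eigenvalues of $M_{x_i}(G)$ (with algebraic multiplicity) are the $i$-th coordinates of the zeros of $\varphi[G]$, and $M_{x_i}(G)\in\cpx^{k\times k}$ has exactly $k$ eigenvalues counted this way. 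The distinctness clause then follows from the standard fact that a commuting family of matrices with $k$ distinct joint eigenvalue tuples (one-dimensional joint eigenspaces) is precisely one that is simultaneously diagonalizable.

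The main obstacle is the converse direction, specifically establishing that commutation implies $\idl(\varphi[G])\subseteq\ker\Psi$. This is really a confluence statement for the rewriting system defined by $\varphi[G]$: commutation of the $M_{x_i}(G)$ plays the role of the ``$S$-polynomial'' criterion, ruling out ambiguities when a monomial $x^\mu\in\mathbb{B}_1$ can be reached through different sequences of multiplications by the $x_i$. Once that bookkeeping is in place, Stickelberger's theorem delivers the rest.
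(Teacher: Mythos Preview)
The paper does not supply its own proof of this proposition: it is quoted verbatim from \cite[Proposition~2.4]{GPandTensorDecomp} and simply cited. So there is no in-paper argument to compare against; your outline is effectively a reconstruction of the standard border-basis/Stickelberger proof that the cited reference gives.

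Your sketch is essentially correct. Two places deserve a firmer statement. First, in the forward direction you should note that the spanning argument for $\{x^\beta:\beta\in\mathbb{B}_0\}$ uses nothing about the number of zeros---it follows from the shape of the generators alone---so $\dim_{\cpx}\cpx[x]/\idl(\varphi[G])\le k$ always holds; the hypothesis ``$k$ zeros with multiplicity'' is then exactly the statement that equality holds, which upgrades spanning to a basis. Second, in the converse you correctly show $\Psi(\varphi[G,\alpha])\,e_{\mathtt 0}=0$ and then propagate to all $e_\beta$ via commutation; to finish the zero count cleanly, observe that the linear map $[p]\mapsto \Psi(p)\,e_{\mathtt 0}$ from $\cpx[x]/\idl(\varphi[G])$ to $\cpx^k$ is surjective (it hits every $e_\beta$), so $\dim_{\cpx}\cpx[x]/\idl(\varphi[G])\ge k$, and together with the upper bound you get exactly $k$. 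This is slightly more direct than invoking Stickelberger for the count. For the distinctness clause, your argument is fine once you add that the algebra generated by the commuting $M_{x_i}(G)$ has dimension exactly $k$ (it is the image of the injective $\bar\Psi$), which forces the $k$ joint eigenvalue tuples in a simultaneous diagonalization to be pairwise distinct.
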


The polynomial tuple $\varphi[G]$
generates the vanishing ideal $I(S)$ of $S$ and
$p=\varphi[G]$ has minimum degrees for \reff{eq:S=V(p)} to hold.

\begin{theorem}  \label{thm:radical}
Assume $S$ is a finite set such that $X_0$ is nonsingular.
Let $G$ be as in (\ref{eq:def:G}).
Then, the ideal $\idl(\varphi[G])$ equals the vanishing ideal of $S$, i.e.,
\be \label{varhiG=I(S)}
\idl(\varphi[G]) \, = \, \{
h \in \re[x]: \, h \equiv 0
\,\,\mbox{on} \, S \} .
\ee
In particular, if a polynomial $h$ vanishes on $S$ identically,
then there are polynomials $p_\af$
($\af \in \mathbb{B}_1$) such that
\be  \label{eq:MinBasis}
h \,= \sum_{ \af \in \mathbb{B}_1 }
q_\af \varphi[G,\af]), \quad
\deg( q_\af ) + |\af|   \le \deg(h).
\ee
\end{theorem}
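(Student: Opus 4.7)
The plan is to establish both (\ref{varhiG=I(S)}) and (\ref{eq:MinBasis}) in one stroke via a degree-tracking reduction modulo the generators $\varphi[G,\alpha]$. The inclusion $\idl(\varphi[G]) \subseteq \{h : h\equiv 0 \mbox{ on }S\}$ is immediate: the defining identity $G = X_0^{-T}X_1^T$ rearranges to $X_1 = G^T X_0$, which componentwise states $(u_i)^\alpha = \sum_{\beta \in \mathbb{B}_0} G(\beta,\alpha)(u_i)^\beta$ for every $u_i \in S$ and every $\alpha \in \mathbb{B}_1$; hence each $\varphi[G,\alpha]$ vanishes identically on $S$.

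For the reverse inclusion together with the degree bound in (\ref{eq:MinBasis}), I would prove by induction on $|\gamma|$ that every monomial $x^\gamma$ admits a representation
\[
x^\gamma \,=\, \sum_{\alpha \in \mathbb{B}_1} q_\alpha \varphi[G,\alpha] \,+\, \sum_{\beta \in \mathbb{B}_0} c_\beta x^\beta
\]
with $\deg(q_\alpha) + |\alpha| \le |\gamma|$. The base cases $\gamma \in \mathbb{B}_0$ (take all $q_\alpha = 0$) and $\gamma \in \mathbb{B}_1$ (take $q_\gamma = 1$, $c_\beta = G(\beta,\gamma)$) are immediate from the definition of $\varphi[G,\gamma]$. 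For $\gamma \notin \mathbb{B}_0 \cup \mathbb{B}_1$, I would write $\gamma = e_i + \gamma'$ for some $i$, apply the inductive hypothesis to $x^{\gamma'}$, multiply through by $x_i$, and then reduce each resulting monomial $x_i x^\beta$ (which lies in $\mathbb{B}_0 \cup \mathbb{B}_1$) using at most one additional generator $\varphi[G, e_i+\beta]$ with a scalar coefficient. Extending by $\re$-linearity to an arbitrary $h$ produces a decomposition $h = \sum_\alpha q_\alpha \varphi[G,\alpha] + r$ with $r \in \mathrm{span}_{\re}\{x^\beta : \beta \in \mathbb{B}_0\}$ and $\deg(q_\alpha) + |\alpha| \le \deg(h)$. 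If $h$ vanishes on $S$, so does $r$; the vanishing condition becomes the linear system $X_0^T c = 0$ where $c$ is the coefficient vector of $r$, and nonsingularity of $X_0$ forces $c = 0$. Thus $h \in \idl(\varphi[G])$ with the required degree-controlled representation, giving both (\ref{varhiG=I(S)}) and (\ref{eq:MinBasis}).

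The main technical obstacle I anticipate is the degree bookkeeping in the inductive step. Multiplying the representation of $x^{\gamma'}$ by $x_i$ raises each $\deg(q_\alpha)$ by one, which is compatible with the increment $|\gamma| = |\gamma'|+1$. The newly introduced generators $\varphi[G, e_i+\beta]$ arrive with constant coefficients, so their admissibility reduces to $|e_i+\beta| \le |\gamma|$, i.e.\ $|\beta| \le |\gamma'|$. This inequality is guaranteed by the graded lexicographic construction of $\mathbb{B}_0$: every $\beta \in \mathbb{B}_0$ satisfies $|\beta| \le d_0 \coloneqq \max\{|\beta'| : \beta' \in \mathbb{B}_0\}$, while the graded ordering forces any $\gamma'$ with $|\gamma'| < d_0$ to already lie in $\mathbb{B}_0$ (so that no reduction is triggered in the first place); in the genuine inductive regime $|\gamma'| \ge d_0$ one has $|\beta| \le d_0 \le |\gamma'|$. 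Once this monotonicity is verified, the induction closes cleanly and the theorem follows.
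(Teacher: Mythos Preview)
Your argument is correct and takes a genuinely different route from the paper's proof. The paper first invokes Proposition~\ref{pro:MG:com} to conclude that the multiplication matrices $M_{x_i}(G)$ are simultaneously diagonalizable, then cites an external result (Corollary~2.7 of \cite{StickelbergerThm}) to deduce that $\idl(\varphi[G])$ is radical, which together with the zero-dimensionality yields \eqref{varhiG=I(S)}; for the degree bound \eqref{eq:MinBasis} it separately invokes the fact that $\varphi[G]$ is a Gr\"obner basis for the graded lexicographic order, again citing an external source (\cite[Lemma~2.8]{GPandTensorDecomp}). Your approach bypasses both citations: the inductive reduction you describe is precisely the border-basis division algorithm carried out by hand, and it delivers the remainder in $\mathrm{span}\{x^\beta:\beta\in\mathbb{B}_0\}$ together with the degree control in a single pass; the nonsingularity of $X_0$ then kills the remainder directly. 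What the paper's route buys is a conceptual link to Stickelberger's theorem and the Gr\"obner-basis structure of $\varphi[G]$, which is reused later (e.g.\ in Section~\ref{sc:inexactS}); what your route buys is a completely self-contained and elementary proof that needs neither radicality nor any Gr\"obner machinery, and in fact never uses the commutation relations \eqref{eq:Mcommu} at all.
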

\begin{proof}
Since $X_0$ is nonsingular, the set $S$ has $k$ distinct points.
Since $G$ is given as in (\ref{eq:def:G}),
the polynomial equation $\varphi[G](x)=0$ has $k$ distinct solutions.
By Proposition~\ref{pro:MG:com}, the multiplication matrices
$M_{x_1}(G),\ldots, M_{x_n}(G)$ are simultaneously diagonalizable.
Note that the ideal $\idl(\varphi[G])$ is zero-dimensional,
because the quotient space $\cpx[x]/\idl(\varphi[G])$
has the dimension $k$.
The ideal $\idl(\varphi[G])$ must be radical.
This can be implied by Corollary~2.7 of \cite{StickelbergerThm}.
So \reff{varhiG=I(S)} holds.

Suppose $h$ is a polynomial such that $h \equiv 0$ on $S$.
Then the above shows that $h \in \idl(\varphi[G])$.
So there exist polynomials $q_\af$
($\af \in \mathbb{B}_1$) such that
\[ h = \sum_{\af \in \mathbb{B}_1} q_\af \varphi[G,\af] . \]
The multiplication matrices
$M_{x_1}(G),\ldots, M_{x_n}(G)$ commute.
One can check that the set of polynomials in the tuple
$\varphi[G]$ is a Gr\"{o}bner basis for $\idl(\varphi[G])$,
with respect to the graded lexicographical ordering.
This can also be implied by the proof of
Lemma~2.8 in \cite{GPandTensorDecomp}.
Therefore, we can further select polynomials
$q_\af \in \re[x]$ with degree bounds as in \reff{eq:MinBasis}.
\end{proof}

The condition that $X_0$ is nonsingular holds
when the points of $S$ are in generic positions.
The equation \reff{eq:MinBasis}
shows that the polynomial tuple $\varphi[G]$
is a minimum-degree generating set for the vanishing ideal $I(S)$.
The following are some examples.

\begin{example}
\label{ex:gen:loss}
i) Consider the set $S$ in $\re^3$ such that
\[
S =
\Big\{\bbm 2\\1\\3\ebm, \bbm -1\\-2\\4\ebm\Big\},
\]
\[
\mathbb{B}_0 = \Big\{
\bbm 0 \\ 0 \\ 0 \ebm, \bbm 1 \\ 0 \\ 0 \ebm
\Big\},\quad
\mathbb{B}_1 = \Big\{
\bbm 0 \\ 1 \\ 0 \ebm, \bbm 0 \\ 0 \\ 1 \ebm,
\bbm 2 \\ 0 \\ 0 \ebm, \bbm 1 \\ 1 \\ 0 \ebm,  \bbm 1 \\  0 \\ 1 \ebm
\Big\}.
\]
The matrix $G$ as in \reff{eq:def:G} and $\varphi[G]$ are
\[
G = \left[\baray{rrrrr}
-1 & \frac{11}{3} & 2 & 2 & -\frac{2}{3}\\
1 & -\frac{1}{3} & 1 & 0 & \frac{10}{3}
\earay\right],  \quad
\varphi[G] =
\left[\begin{aligned}
x_2-x_1+1\\
\frac{x_1}{3}+x_3-\frac{11}{3} \\
x_1^2-x_1-2\\
x_1x_2-2\\
x_1x_3-\frac{10x_1}{3} + \frac{2}{3}
\end{aligned} \right].
\]

\noindent
ii) Consider the set $S$ in $\re^2$ such that
\[
S =
\Big\{\bbm 2\\-1\ebm, \bbm -1\\3\ebm, \bbm -2\\-2\ebm \Big\},
\]
\[
\mathbb{B}_0 = \Big\{
\bbm 0 \\ 0 \ebm, \bbm 1 \\ 0 \ebm, \bbm 0 \\ 1 \ebm
\Big\},\quad
\mathbb{B}_1 = \Big\{
\bbm 2 \\ 0 \ebm, \bbm 1 \\ 1 \ebm, \bbm 0 \\ 2 \ebm
\Big\}.
\]
The matrix $G$ as in (\ref{eq:def:G}) and $\varphi[G]$ are
\[
G = \frac{1}{19}\left[\baray{rrr}
58 & -14 & 82\\
3 & -23 & -20\\
-12 & -22& 23
\earay \right],
\quad
\varphi[G] = \left[\begin{aligned}
x_1^2 + \frac{12x_2}{19} - \frac{3x_1}{19} - \frac{58}{19}  \\
x_1x_2 + \frac{22x_2}{19} + \frac{23x_1}{19} + \frac{14}{19}  \\
x_2^2 - \frac{23x_2}{19} + \frac{20x_1}{19} - \frac{82}{19}
\end{aligned} \right].
\]
	
\noindent
iii) Consider the set $S$ in $\re^2$ such that
\[
S=\Big\{
\bbm 3\\-1\ebm, \bbm -1\\2\ebm , \bbm 2\\1\ebm , \bbm -2\\-1 \ebm\Big\},
\]
\[
\mathbb{B}_0 = \Big\{
\bbm 0 \\ 0 \ebm, \bbm 1 \\ 0 \ebm, \bbm 0 \\ 1 \ebm, \bbm 2 \\ 0 \ebm
\Big\},\quad
\mathbb{B}_1 = \Big\{
\bbm 1\\ 1 \ebm, \bbm 0 \\ 2 \ebm, \bbm 3 \\ 0 \ebm, \bbm 2 \\ 1 \ebm
\Big\}.
\]
The matrix $G$ in (\ref{eq:def:G}) and the polynomial vector $\varphi[G]$ are
\[
G = \left[\baray{rrrr}
 20 & -5 & -36 & 22 \\
 \frac{7}{2} & -\frac{3}{2} & -2 & \frac{9}{2}\\
 -7 & 3 & 12 & -5\\
 -\frac{9}{2} & \frac{3}{2} & 9 & -\frac{11}{2}
\earay\right],
\quad
\varphi[G] = \left[\begin{aligned}
x_1x_2 + \frac{9x_1^2}{2} + 7x_2 - \frac{7x_1}{2} - 20\\
x_2^2 - \frac{3x_1^2}{2} - 3x_2 + \frac{3x_1}{2} + 5  \\
x_1^3 - 9x_1^2 - 12x_2 + 2x_1 + 36  \\
x_1^2x_2 + \frac{11x_1^2}{2} + 5x_2 - \frac{9x_1}{2} - 22
\end{aligned}\right].
\]
\end{example}

For given $S$, the polynomial tuple $\varphi[G]$ with $G$ as in \reff{eq:def:G},
gives the loss function $f=\|\varphi[G]\|^2$
whose global minimizers are precisely the points in $S$.
However, the loss function $f$
may have spurious minimizers.

\begin{exm}
Consider the $S = \Big\{\bbm 5\\-2\ebm,\bbm 4\\3\ebm \Big\}$ in $\re^2$.
The loss function $f=\|\varphi[G]\|^2$ is
\[
f(x) = (x_2+5x_1-23)^2+
(x_1^2-9x_1+20)^2+(x_1x_2+22x_1-100)^2.
\]
Its total gradient $\nabla f$ is
\[
\left[\baray{r}
 4x_1^3 - 54x_1^2 + 2x_1x_2^2 + 88x_1x_2 + 1260x_1 - 190x_2 - 4990 \\
                             2x_2 - 190x_1 + 2x_1^2x_2 + 44x_1^2 - 46
\earay\right]
\]
and its Hessian $\nabla^2 f$ is
\[
\left[\baray{rr}
12 x_1^2 - 108 x_1 + 2 x_2^2 + 88 x_2 + 1260  &   88 x_1 + 4 x_1 x_2 - 190  \\
                 88 x_1 + 4 x_1 x_2 - 190 &           2 x_1^2 + 2
\earay\right].
\]
By checking the optimality conditions $\nabla f(x)=0,\,\nabla^2f(x)\succeq 0$,
we get a local minimizer $(-2.2588, -49.7911)$,
which is not a global one.
\end{exm}

\section{Simplicial loss functions}
\label{sec:standard}

In this section, we study loss functions when $S$
is the vertex set of a standard simplex.
For such a case,
we show that the loss function $f=\|\varphi[G]\|^2$
has no \textit{spurious minimizers}, i.e.,
every local minimizer of $f$ is also a global minimizer.
Moreover, when $S$ is not the vertex set of a standard simplex,
we apply a transformation and get similar loss functions.

\subsection{Simplicial loss functions}
\label{ssc:simplex}

For a vector $a \coloneqq (a_1,\ldots,a_n)$, with each scalar $a_i \ne 0$,
consider the standard simplex vertex set
\begin{equation}
\label{eq:asimplex}
\Dt_n(a) \, \coloneqq  \, \{\mathtt{0},\, a_1e_1,\ldots, a_n e_n\} .
\end{equation}
For the special case that $a=(1,\ldots,1)$, we denote
\begin{equation}
\label{def:standset}
\Dt_n \coloneqq \{\mathtt{0},\, e_1,\ldots,\, e_n\}.
\end{equation}
When the dimension $n$ is clear in the context,
we just write $\Dt = \Dt_n$ for convenience.
In this subsection, we consider the special case that $S = \Dt_n (a)$.
Then the monomial power sets $\mathbb{B}_0$, $\mathbb{B}_1$ are respectively
\[\begin{aligned}
\mathbb{B}_0 &= \{\mathtt{0}, e_1,\ldots, e_n\}, \\
\mathbb{B}_1 &=\{ 2e_1, e_1+e_2,\,\ldots, 2e_n \}.
\end{aligned}
\]
For the matrix $G\in \mathbb{R}^{\mathbb{B}_0\times \mathbb{B}_1}$
given as in \reff{eq:def:G}, we have that
\begin{equation}
\label{eq:standvar}
\boxed{
\baray{lcll}
\varphi[G, 2e_i] &=& x_i^2 - a_ix_i & (i\in[n]), \\
\varphi[G,e_i + e_j] &=& x_ix_j & (i<j).
\earay
}
\end{equation}
The resulting loss function for the set $\Dt_n(a)$ is
\begin{equation}
\label{def:a_simp_loss}
f(x) = \sum_{i=1}^n x_i^2(x_i-a_i)^2+\sum_{1\le i<j\le n} x_i^2x_j^2.
\end{equation}
In particular, the above loss function for $\Dt_n$ is
\be  \label{def:standf}
F(x) \, \coloneqq \, \sum_{i=1}^nx_i^2(x_i-1)^2 +
\sum_{1\le i<j\le n} x_i^2x_j^2.
\ee
A nice property is that the simplicial loss function as in
\reff{def:a_simp_loss} has no spurious minimizers.
\begin{theorem}
\label{thm:standardloss}
Fix nonzero scalars $a_1,\ldots,a_n$, the function $f$
in (\ref{def:a_simp_loss}) has no spurious minimizers, i.e.,
every local minimizer of $f$ is also a global minimizer.
\end{theorem}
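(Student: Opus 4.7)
The plan is to show that at every critical point $x$ of $f$ with $x \notin \Delta_n(a)$, the Hessian $\nabla^2 f(x)$ fails to be positive semidefinite, so $x$ cannot be a local minimizer. A direct differentiation gives
\[
\tfrac{1}{2}\frac{\partial f}{\partial x_k}(x) \,=\, x_k\, h_k(x), \qquad h_k(x) \,\coloneqq\, x_k^2 - 3 a_k x_k + a_k^2 + r,
\]
where $r \coloneqq \sum_j x_j^2$. Let $S = \{k : x_k \ne 0\}$ be the support of $x$. At a critical point, $h_k(x) = 0$ for every $k \in S$, which forces $x_k = \tfrac{1}{2}(3 a_k \pm \sqrt{5 a_k^2 - 4r})$ and, by reality of the roots, the discriminant bound $r \le \tfrac{5}{4} a_k^2$ for each $k \in S$. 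Second differentiation, using $h_k = 0$ on $S$ and $x_k = 0$ off $S$, shows that $\nabla^2 f(x)$ is block-diagonal with respect to the splitting $S \sqcup S^c$: the $S^c$-block is diagonal with positive entries $2(a_k^2+r)$, while the $S$-block $H_S$ has entries $(H_S)_{kk} = 8 x_k^2 - 6 a_k x_k$ and $(H_S)_{kl} = 4 x_k x_l$ for $k \ne l$. So it is enough to prove $\operatorname{tr}(H_S) < 0$ whenever $x \notin \Delta_n(a)$.

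The key step is to evaluate this trace by means of the critical-point equations. From $h_k(x) = 0$ one reads off $3 a_k x_k = x_k^2 + a_k^2 + r$; summing over $k \in S$ gives
\[
3 \sum_{k \in S} a_k x_k \,=\, (|S|+1)\, r + \sum_{k \in S} a_k^2 .
\]
Substituting this into $\operatorname{tr}(H_S) = 8r - 6 \sum_{k \in S} a_k x_k$ yields the clean identity
\[
\operatorname{tr}(H_S) \,=\, (6 - 2|S|)\, r \,-\, 2 \sum_{k \in S} a_k^2 .
\]

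I then split on $|S|$. For $|S| \ge 3$ the coefficient $6 - 2|S|$ is nonpositive and $\sum_{k \in S} a_k^2 > 0$, so $\operatorname{tr}(H_S) < 0$ immediately. For $|S| = 2$ the trace equals $2r - 2\sum_{k \in S} a_k^2$; combining the discriminant bound $r \le \tfrac{5}{4} \min_{k \in S} a_k^2$ with $\sum_{k \in S} a_k^2 \ge 2 \min_{k \in S} a_k^2$ gives $\operatorname{tr}(H_S) \le -\tfrac{3}{2} \min_{k \in S} a_k^2 < 0$. I expect this $|S|=2$ step to be the main technical obstacle, since it really exploits the specific coefficient $5/4$ coming from the discriminant and does not follow from a trivial counting argument. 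For $|S| = 1$ the critical equation factors as $(2x_k - a_k)(x_k - a_k) = 0$, and the trace $4r - 2 a_k^2$ equals $-a_k^2 < 0$ at $x_k = a_k/2$ but $2 a_k^2 > 0$ at $x_k = a_k$; the remaining case $|S|=0$ is just the origin. Hence the only critical points at which $\nabla^2 f(x) \succeq 0$ are the vertices of $\Delta_n(a)$, which are the global minimizers, completing the proof.
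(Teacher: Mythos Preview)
Your proof is correct and takes a genuinely different route from the paper's. Both arguments start from the first-order condition $x_k\,h_k(x)=0$ and the block-diagonal structure of the Hessian, but they diverge at the second-order step. The paper works coordinate by coordinate: it imposes the single inequality $\partial^2 f/\partial x_k^2 = 8x_k^2 - 6a_kx_k \ge 0$ for each $k\in S$, combines this with the quadratic formula written in the form $z_k=(3a_k\pm\sqrt{a_k^2-8\sum_{j\ne k}z_j^2})/4$ to force $|z_k|\ge \tfrac{3}{4}|a_k|$, and then plays any two nonzero indices against each other to obtain $2a_i^2\ge 9a_j^2$ and $2a_j^2\ge 9a_i^2$, a contradiction. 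You instead sum the diagonal entries to get the closed-form trace identity $\operatorname{tr}(H_S)=(6-2|S|)\,r-2\sum_{k\in S}a_k^2$, and then invoke the single estimate $r\le\tfrac{5}{4}\min_{k\in S}a_k^2$ (this is the same discriminant condition, rewritten with $r$ rather than $r-x_k^2$ as the fixed parameter). Your formula makes $|S|\ge 3$ immediate and reduces $|S|=2$ to a one-line inequality; the paper's pairwise argument handles all $|S|\ge 2$ uniformly but needs the extra sign analysis pinning down $|z_k|\ge\tfrac{3}{4}|a_k|$. Either way the underlying obstruction is the same; your packaging via the trace is arguably cleaner.
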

\begin{proof}
Suppose $z = (z_1, \ldots, z_n)$ is a local minimizer of $f$.
Then $z$ satisfies the optimality conditions
\[
\nabla f(z) = 0, \quad \nabla^2 f(z) \succeq 0 .
\]
This implies that for $i = 1,\ldots,n$,
\begin{align}
\label{eq:partial_grad:stand1}
\frac{\partial f}{\partial x_i}(z) &=
2z_i\big( 2z_i^2-3a_iz_i+(z^Tz-z_i^2+a_i^2) \big)=0,\\
\label{eq:partial_grad:stand2}
\frac{\partial^2 f}{\partial x_i^2}(z) &=
12z_i^2-12a_iz_i+2(z^Tz-z_i^2+a_i^2)\ge 0.
\end{align}
Denote $\delta_i(z) \coloneqq a_i^2-8(z^Tz-z_i^2)$. The real solutions for (\ref{eq:partial_grad:stand1}) are $z_i=0$ and
\begin{equation}
\label{eq:z_i}
z_i=\frac{3a_i\pm \sqrt{\delta_i(z)}}{4}\quad \text{if}\quad \delta_i(z)\ge 0.
\end{equation}
If each $z_i=0$, then $z=\mathtt{0}$ is a global minimizer.
Suppose some $z_i$ is nonzero, then it satisfies $\delta_i(z)\ge 0$ and $2z_i^2-3a_iz_i+(z^Tz-z_i^2+a_i^2)=0$. So \reff{eq:partial_grad:stand2}
can be reformulated as
\[
\frac{\partial^2 f}{\partial x_i^2}(z) = 8z_i^2-6a_iz_i=2z_i(4z_i-3a_i)\ge 0.
\]
Plug \reff{eq:z_i} into the above inequality.
Since $\sqrt{\delta_{i}(z)} \le |a_i|< |3a_i|$ (note $a_i\not=0$),
\[
z_i =
\bca
\frac{3a_i-\sqrt{\delta_i(z)}}{4} & \text{if} \, a_i < 0, \\
\frac{3a_i+\sqrt{\delta_i(z)}}{4} & \text{if} \, a_i > 0.
\eca
\]
It is clear that $|z_i|\ge|3a_i/4|$.
If $z_i$ is the only nonzero entry of $z$, then $\sqrt{\delta_i(z)}=|a_i|$ and $z=a_ie_i$,
which is a global minimizer. Suppose $z$ has another nonzero entry $z_j$.
By a similar argument, we can get $\delta_j(z)\ge 0$ and $|z_j|\ge |3a_j/4|$.
Note that $2a_i^2-9a_j^2\ge 0$ since
\[
a_i^2-8\cdot\Big|\frac{3a_j}{4}\Big|^2
\ge a_i^2-8z_j^2 \ge \delta_i(z)\ge 0.
\]
Similarly, $2a_j^2-9a_i^2\ge 0$, so
\[
2a_j^2-9a_i^2\ge 2a_j^2-9\cdot \frac{9}{2}a_j^2=-\frac{77}{2}a_j^2\ge 0.
\]
The above holds if and only if $a_j=0$,
which contradicts that all $a_1, \ldots, a_n$ are nonzero.
Therefore, every local minimizer of $f$ is a global minimizer,
i.e., $f$ has no spurious minimizers.

\end{proof}

\subsection{Transformation for general sets}
\label{ssc:trans}

When $S$ is not a simplicial vertex set,
we can still use the function $F$ in \reff{def:standf}
to get new loss functions, up to a transformation.
These new functions have no spurious minimizers.
They are called {\it transformed} simplicial loss functions.
Consider that $S$ is given as
\be  \label{eq:S:gen}
S \, = \, \{u_1,\ldots,u_k\} .
\ee
We discuss the transformation for two different cases.

\subsection*{Case I: $k \le n+1$}

Consider the vertex set of a standard simplex set in $\mathbb{R}^{k-1}$
\[
\Dt_{k-1} = \{\mathtt{0}, \,e_1,\ldots,\, e_{k-1}\}.
\]
The loss function as in \reff{def:standf} for $\Dt_{k-1}$ is
\begin{equation}
\label{eq:stanLF:k-1}
F_{k-1}(z) \, \coloneqq \sum_{i=1}^{k-1}
z_i^2(z_i-1)^2 + \sum_{1\le i<j\le k-1} z_i^2z_j^2 ,
\end{equation}
in the variable $z =(z_1,\ldots,z_{k-1})$.
Consider the linear map
\be \label{LFtrans1}
\ell : \re^{k-1}\rightarrow \re^n, \quad
\ell(e_i) = u_i-u_k,\, i = 1,\ldots,k-1.
\ee
The representing matrix for the linear map $\ell$ is
\be \label{mat:U}
U =  \bbm u_1-u_k & \cdots &  u_{k-1}-u_k \ebm.
\ee
When $u_1, \ldots, u_k$ are in generic positions,
the matrix $U$ has full column rank. Let
\[
U^{\dagger} \, \coloneqq \, (U^T U)^{-1} U^T
\]
be the Pseudo inverse of $U$.
For $x=(x_1,\ldots,x_n)$, consider the loss function
\be  \label{eq:LF:k<n+1}
f(x) = F_{k-1}\big( U^{\dagger}(x-u_k) \big).
\ee
Recall that $\text{Null}(U^{\dagger})$
denotes the null space of  the matrix $U^{\dagger}$.

\begin{theorem}
\label{thm:k<n+1}
Suppose $k\le n+1$ and $\rank \,\, U = k-1$.
Then, the function $f$ as in (\ref{eq:LF:k<n+1})
is a loss function for the set
\[
S+\text{Null}(U^{\dagger}) \, \coloneqq \,
\{x + y:  \, x \in S, U^{\dagger} y = 0 \} .
\]
Moreover, $f$ has no spurious minimizers.
\end{theorem}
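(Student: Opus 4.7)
\smallskip
\noindent\textbf{Proof plan.}
The approach has two parts: identify the zero set of $f$, and then rule out spurious local minimizers by lifting through $U$. Throughout, I will use that $F_{k-1}$ in \reff{eq:stanLF:k-1} is exactly the simplicial loss function of Theorem~\ref{thm:standardloss} with $a = e$, so $F_{k-1}\ge 0$ with equality precisely on $\Dt_{k-1}=\{\mathtt{0},e_1,\ldots,e_{k-1}\}$, and with no spurious minimizers.

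First I would verify the zero-set claim. Because $U$ has full column rank $k-1$, one has the left-inverse property $U^{\dagger}U = I_{k-1}$, hence $U^{\dagger}(u_i-u_k)=U^{\dagger}Ue_i = e_i$ for $i=1,\ldots,k-1$, and trivially $U^{\dagger}(u_k-u_k)=0$. Therefore $f(x)=F_{k-1}(U^{\dagger}(x-u_k))=0$ if and only if $U^{\dagger}(x-u_k)\in\Dt_{k-1}$, which in turn is equivalent to $U^{\dagger}(x-u_k)=U^{\dagger}(u_i-u_k)$ for some $i\in[k]$, that is, $x-u_i\in\text{Null}(U^{\dagger})$. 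This gives exactly $x\in S+\text{Null}(U^{\dagger})$, so $f$ is a loss function for that set.

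Next I would handle the spurious-minimizer question. Let $x^*$ be a local minimizer of $f$ and set $z^* \coloneqq U^{\dagger}(x^*-u_k)\in\re^{k-1}$. The key identity, valid for every $w\in\re^{k-1}$, is
\[
f(x^*+Uw) \, = \, F_{k-1}\big(U^{\dagger}(x^*-u_k)+U^{\dagger}Uw\big) \, = \, F_{k-1}(z^*+w).
\]
Since $w \mapsto Uw$ is Lipschitz, any neighborhood of $x^*$ in $\re^n$ on which $f$ attains a local minimum pulls back to a neighborhood of $0$ in $\re^{k-1}$ on which $F_{k-1}(z^*+w)\ge F_{k-1}(z^*)$. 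Thus $z^*$ is a local minimizer of $F_{k-1}$. By Theorem~\ref{thm:standardloss}, $z^*$ is then a global minimizer of $F_{k-1}$, forcing $F_{k-1}(z^*)=0$ and hence $f(x^*)=0$. Because $f\ge 0$, this makes $x^*$ a global minimizer of $f$.

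The main obstacle I anticipate is that $U^{\dagger}:\re^n\to\re^{k-1}$ is not injective when $k-1<n$, so a priori a local minimizer of $f$ need not correspond to a local minimizer of $F_{k-1}$ under arbitrary variations in $\re^n$. The resolution is exactly the restriction of perturbations to $\text{range}(U)$ used above: variations in $\text{Null}(U^{\dagger})$ leave $f$ invariant and so impose no new constraint on $x^*$, while the transverse variations are captured by $w\mapsto x^*+Uw$ in a manner compatible with the identity $U^{\dagger}U=I_{k-1}$. Once this lifting is set up, invoking Theorem~\ref{thm:standardloss} is immediate.
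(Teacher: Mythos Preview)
Your argument is correct. The zero-set identification is the same as the paper's, but your treatment of spurious minimizers differs from the paper's route. The paper computes $\nabla_x f(x)=(U^{\dagger})^T\nabla_z F_{k-1}(z)$ and $\nabla_x^2 f(x)=(U^{\dagger})^T\nabla_z^2 F_{k-1}(z)\,U^{\dagger}$, then uses that $U^{\dagger}$ has full row rank to deduce $\nabla_z F_{k-1}(z^*)=0$ and $\nabla_z^2 F_{k-1}(z^*)\succeq 0$; since these second-order conditions are \emph{not} sufficient for local optimality, the paper cannot invoke Theorem~\ref{thm:standardloss} as stated and instead points back to its proof, which in fact only uses those necessary conditions. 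Your restriction argument $w\mapsto f(x^*+Uw)=F_{k-1}(z^*+w)$ shows directly that $z^*$ is a genuine local minimizer of $F_{k-1}$, so you can cite Theorem~\ref{thm:standardloss} as a black box. Your route is more elementary (no derivatives needed) and cleaner in its use of the earlier theorem; the paper's route makes the dependence on the chain rule and the rank hypothesis explicit.
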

\begin{proof}
The function $f$ as in \reff{eq:LF:k<n+1} is nonnegative everywhere.
Note that $f(x)=0$ if and only if $U^{\dagger}(x-u_k) \in \Dt_{k-1}$.
It holds that
\[
\Dt_{k-1} \, = \, \{ U^{\dagger}(x -u_k ): x\in S \}.
\]
For $x\in \mathbb{R}^n$, we have $U^{\dagger}(x-u_k) \in \Dt_{k-1}$
if and only if $x\in S+\text{Null}(U^{\dagger})$. This shows that
$f$ is a loss function for $S+\text{Null}(U^{\dagger})$ in $\mathbb{R}^n$.

The gradient and Hessian of $f$ can be written as
\[
\nabla_x f(x) = (U^{\dagger})^T \nabla_z F_{k-1}(z),\quad
\nabla_x^2 f(x) = (U^{\dagger})^T \nabla_z^2F_{k-1}(z) U^{\dagger}.
\]
Note that $U^{\dagger}$ has full row rank.
If $u$ is a local minimizer of $f$,
then $\nabla_x f(u) = 0$, $\nabla_x^2 f(u) \succeq 0$.
Let $z = U^\dagger (u-u_k)$, then the above implies that
\[
\nabla_zF_{k-1}(z) = 0,\quad \nabla_z^2F_{k-1}(z)\succeq 0.
\]
As in the proof of Theorem \ref{thm:standardloss},
one can show that $z\in \Dt_{k-1}$.
This implies that $z$ is a global minimizer of $F_{k-1}$
and hence $u$ is a global minimizer of $f$.
So $f$ has no spurious minimizers.
\end{proof}

\subsection*{Case II: $k > n+1$}
\label{case:k>n+1}

Let $\omega: \mathbb{R}^n\rightarrow\mathbb{R}^{k-1}$
be the monomial function such that
\be \label{eq:dimenlift}
[x]_{\mathbb{B}_0} = \bbm 1 \\ \omega(x) \ebm,
\ee
where $\mathbb{B}_0$ is the power set in (\ref{def:B0}).
For the set $S$ as in (\ref{eq:S:gen}), denote
\be  \label{eq:hatS}
\hat{S} \, \coloneqq \, \Big\{\omega(u_1),
\ldots,\omega(u_k) \Big\}
\subseteq\mathbb{R}^{k-1}.
\ee
Define the linear map $\mc{L}$ such that
\[
\mc{L} : \, \mathbb{R}^{k-1} \to \re^{ k-1  }, \quad
\mc{L}(e_i) = \omega(u_i)-\omega(u_k),
\, i = 1,\ldots,k-1.
\]
The representing matrix for the linear map $\mc{L}$ is
\be \label{mat:L}
L = \bbm
\omega(u_1)
& \cdots &
\omega(u_{k-1})
\ebm - \bbm
 \omega(u_k)
& \cdots &
 \omega(u_k)
\ebm  .
\ee
When $u_1,\ldots,u_n$ are in generic positions,
the matrix $L$ is nonsingular. For such a case,  define the function
\be \label{eq:LF:k>n+1:k-1}
\hat{f}(z) \, \coloneqq \,
F_{k-1}\big( L^{-1}(z - \omega(u_k) \big),
\ee
in the $z = (z_1,\ldots,z_{k-1})$,
where $F_{k-1}$ is the simplicial loss function as in (\ref{eq:stanLF:k-1}).
The above $\hat{f}$ is called a transformed
simplicial loss function for $\hat{S}$.
The following theorem follows from Theorem \ref{thm:k<n+1}.

\begin{theorem}
\label{thm:k>n+1}
Suppose $k>n+1$ and $L$ is nonsingular.
Then, the function $\hat{f}$ as in \reff{eq:LF:k>n+1:k-1}
is a loss function for $\hat{S}$ and it has no spurious minimizers.
\end{theorem}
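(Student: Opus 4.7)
The plan is to derive Theorem~\ref{thm:k>n+1} as a direct corollary of Theorem~\ref{thm:k<n+1} by reinterpreting the setup in the lifted ambient space $\mathbb{R}^{k-1}$. After passing from $S\subseteq\mathbb{R}^n$ to $\hat{S}\subseteq\mathbb{R}^{k-1}$ via the monomial map $\omega$, we have $k$ points sitting in dimension $k-1$, so the relation $k\le (k-1)+1$ holds with equality. This places us precisely in the regime of Case~I, with $\hat{S}$ playing the role of $S$, $\mathbb{R}^{k-1}$ playing the role of $\mathbb{R}^n$, $\omega(u_k)$ playing the role of $u_k$, and the matrix $L$ from (\ref{mat:L}) playing the role of $U$ from (\ref{mat:U}).

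The first step is to notice that because $L$ is a square $(k-1)\times(k-1)$ matrix that is nonsingular by hypothesis, its pseudo-inverse reduces to the ordinary inverse:
\[
L^{\dagger} \,=\, (L^T L)^{-1}L^T \,=\, L^{-1}.
\]
In particular, $\mathrm{rank}\,L=k-1$ and $\mathrm{Null}(L^{\dagger})=\{0\}$. Consequently, the ``set plus null space'' appearing in Theorem~\ref{thm:k<n+1} collapses to
\[
\hat{S}+\mathrm{Null}(L^{\dagger}) \,=\, \hat{S}.
\]

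The second step is to invoke Theorem~\ref{thm:k<n+1} directly. Its hypotheses are satisfied because $\hat{S}$ consists of $k$ points in $\mathbb{R}^{k-1}$ with $k\le (k-1)+1$, and $\mathrm{rank}\,L=k-1$. The conclusion then states that $\hat{f}(z)=F_{k-1}\!\bigl(L^{-1}(z-\omega(u_k))\bigr)$ is a loss function for $\hat{S}+\mathrm{Null}(L^{-1})=\hat{S}$ and has no spurious minimizers. This is exactly the statement of Theorem~\ref{thm:k>n+1}.

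No genuine obstacle arises; the content of this theorem is essentially bookkeeping on top of Theorem~\ref{thm:k<n+1}. The only thing worth checking carefully is that the substitution of symbols matches: $L$ is square because we are in dimension $k-1$ with exactly $k-1$ difference vectors $\omega(u_i)-\omega(u_k)$, and its nonsingularity is precisely the full-column-rank condition needed by Theorem~\ref{thm:k<n+1}. Once this correspondence is spelled out, the loss-function property and the absence of spurious minimizers transfer immediately.
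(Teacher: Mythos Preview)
Your proposal is correct and matches the paper's approach: the paper simply states that Theorem~\ref{thm:k>n+1} ``follows from Theorem~\ref{thm:k<n+1}'', and your argument spells out exactly why---in the lifted space $\mathbb{R}^{k-1}$ the $k$ points satisfy $k\le(k-1)+1$, the square nonsingular $L$ has $L^\dagger=L^{-1}$ with trivial null space, so $\hat{S}+\mathrm{Null}(L^{-1})=\hat{S}$.
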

For $x=(x_1,\ldots,x_n)$, define the function
\be \label{eq:LF:k>n+1}
f(x) = F_{k-1}\big(L^{-1}(\omega(x)-\omega(u_k)\big).
\ee
\begin{cor}
\label{cor:k>n+1}
Suppose $k>n+1$ and $L$ in (\ref{mat:L}) is nonsingular,
then the function $f$ in (\ref{eq:LF:k>n+1})
is a loss function for $S$.
\end{cor}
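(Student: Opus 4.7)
The plan is to reduce the statement to Theorem~\ref{thm:k>n+1} by noting that $f$ factors through $\omega$, and then to verify that the monomial map $\omega$ separates points of $\mathbb{R}^n$ under the standing hypothesis $k>n+1$.

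First I would observe, directly from the definitions in (\ref{eq:LF:k>n+1:k-1}) and (\ref{eq:LF:k>n+1}), that
\[
f(x) \,= \, \hat{f}(\omega(x)) \quad \text{for all } x \in \mathbb{R}^n.
\]
Since $F_{k-1}$ is a sum of squares, $f$ is nonnegative on $\mathbb{R}^n$. Thus $f(x)=0$ if and only if $\hat{f}(\omega(x))=0$. By Theorem~\ref{thm:k>n+1}, $\hat{f}$ is a loss function for $\hat{S}$, so $\hat{f}(\omega(x))=0$ precisely when
\[
\omega(x) \, \in \, \hat{S} \, = \, \{\omega(u_1),\ldots,\omega(u_k)\}.
\]

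Next I would show that the monomial map $\omega:\mathbb{R}^n \to \mathbb{R}^{k-1}$ is injective. Because $k>n+1$, the set $\mathbb{B}_0$, being the first $k$ elements of $\mathbb{N}^n$ in the graded lexicographic ordering, must contain $\mathtt{0},e_1,\ldots,e_n$. Consequently each coordinate function $x_1,\ldots,x_n$ appears as an entry of $[x]_{\mathbb{B}_0}$, hence as an entry of $\omega(x)$ after stripping the leading constant $1$. If $\omega(x)=\omega(y)$, the entries corresponding to $e_1,\ldots,e_n$ give $x_i=y_i$ for every $i$, so $x=y$.

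Combining the two observations, $\omega(x)\in \hat{S}$ if and only if $x=u_i$ for some $i\in[k]$, i.e., $x\in S$. This establishes $f(x)=0 \Leftrightarrow x\in S$, which is condition (\ref{eq:def:LF:exact}), so $f$ is a loss function for $S$. The only nontrivial point is recognizing that the hypothesis $k>n+1$ forces $\{e_1,\ldots,e_n\}\subseteq \mathbb{B}_0$, which is what makes $\omega$ injective; once that is in hand, the corollary is a short pull-back of Theorem~\ref{thm:k>n+1}.
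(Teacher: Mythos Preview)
Your proposal is correct and follows essentially the same route as the paper: use nonnegativity of $F_{k-1}$, invoke Theorem~\ref{thm:k>n+1} to characterize the zero set of $\hat f$, and conclude via injectivity of $\omega$. The paper's proof merely asserts that ``$\omega$ is a one-to-one map,'' whereas you supply the explicit reason (namely, $k>n+1$ forces $e_1,\ldots,e_n\in\mathbb{B}_0$, so the coordinate functions appear among the entries of $\omega$); this is a welcome clarification rather than a different argument.
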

\begin{proof}
The function $f$ as in \reff{eq:LF:k>n+1} is nonnegative everywhere.
By Theorem~\ref{thm:k>n+1}, we know $f(x)=0$
if and only if $\omega(x)\in \hat{S}$.
Since $\omega$ is a one-to-one map, the $f$ is a loss function for $S$.
\end{proof}

The transformed simplicial loss functions in
\reff{eq:LF:k<n+1} and \reff{eq:LF:k>n+1:k-1}
have no spurious minimizers. The following are some
examples of transformed simplicial loss functions.

\begin{example}
\label{ex:trans:loss}
i) Consider the set $S$ in $\re^3$ such that
\[
S = \Big\{\bbm 4\\-2\\1\ebm, \bbm-1\\3\\-5\ebm\Big\} .
\]
The matrix $U$ as in \reff{mat:U} and its Pseudo inverse are
\[
U = \left[\baray{r} 5\\-5\\6 \earay\right],\quad
U^{\dagger} = \frac{1}{86} \left[\baray{r}  5 \\ -5 \\ 6 \earay\right]^T.
\]
Since $k=2$, the simplicial loss function for $\Dt_{k-1}$ is
$F_1 = z^2(z-1)^2$ in the univariate variable $z$.
Then, the transformed simplicial loss function as in (\ref{eq:LF:k<n+1}) is
\[
f(x) = \left(\frac{5x_1}{86}-\frac{5x_2}{86}+\frac{3x_3}{43}+\frac{25}{43}\right)^2
\cdot \left(
\frac{5x_1}{86}-\frac{5x_2}{86}+\frac{3x_3}{43}-\frac{18}{43}
\right)^2.
\]	

\noindent
ii) Consider the set $S$ in $\re^2$ such that
\[S = \Big\{\bbm 2\\3\ebm, \bbm -1\\-2\ebm,
\bbm 1\\-3\ebm, \bbm -2\\ 2\ebm\Big\} .
\]
Since $k=4>n+1$, the set $\hat{S}$ in \reff{eq:hatS} is
\[
\hat{S} = \left\{
\bbm 2\\3\\4\ebm,
\bbm -1\\-2\\1\ebm,
\bbm 1\\-3\\1\ebm,
\bbm -2\\2\\4\ebm
\right\}.
\]
The matrix $L$ as in \reff{mat:L} and its inverse are
\[
L = \left[\baray{rrr}
4  &   1  &   3\\
1  &  -4  &  -5\\
0  &  -3  &  -3
\earay\right],
\quad
L^{-1} = \frac{1}{18}
\left[\baray{rrr}
3 & 6 & -7\\
-3 & 12 & -23\\
3 & -12 & 17
\earay\right].
\]
Since $k=4$, the simplicial loss function for $\Dt_{k-1}$ is
\[
F_3(z) = z_1^2(z_1-1)^2+z_1^2z_2^2+z_2^2(z_2-1)^2
+z_2^2z_3^2+z_3^2(z_3-1)^2.
\]
in the variable $z=(z_1,z_2,z_3)$.
Then, the transformed simplicial loss function
as in (\ref{eq:LF:k>n+1:k-1}) is
$\hat{f}(z) = F_3(L^{-1}(z-\omega(u_4))$, with
\[
L^{-1}(z-\omega(u_4)) = \frac{1}{18}
\left[\baray{r}
3z_1 + 6z_2 - 7z_3 + 22\\
-3z_1 + 12z_2 - 23z_3 +62\\
3z_1 - 12z_2 + 17z_3-38
\earay\right].
\]
\end{example}

\section{Finite sets with noises}
\label{sc:inexactS}

In this section, we study loss functions
for finite sets that are given with noises.
In many applications, the finite set $S$, with the cardinality $k$,
is often approximately given by another finite set
$T$, with the cardinality $N \gg k$.
For instance, each point of $S$ is often approximated by
a number of samplings, and $T$ consists of all such samplings.
The cardinality $N$ is the total number of samplings.
We look for good loss functions for such approximately given sets.
This kind of questions have important applications
in clustering and classification.

\subsection{Best approximation sets}
\label{subsec:deterRep}

Suppose $S$ is approximately given by a sampling set $T$, say,
\be \label{eq:inexactS}
T = \{v_1,\ldots,v_N\}.
\ee
Each point of $S$ is sampled by a certain number of points in $T$.
We discuss how to recover the $k$ points of $S$ from sampling points in $T$.

A finite set can be represented as the optimizer set of a loss function.
For convenience, we consider loss functions
whose minimum values are zeros.
Let $\mc{F}$ be a family of loss functions such that
each $f \in \mc{F}$ has $k$ common zeros.
The loss function family $\mc{F}$ is parameterized by some parameters.
For such given $\mc{F}$, we look for the best loss function in $\mc{F}$
such that its average value on $T$ is the smallest.
This leads to the following definition.

\begin{defi}  \label{def:bestRep}
Let $\mc{F}$ be a family of loss functions such that
each $f \in \mc{F}$ is nonnegative and it has $k$ common zeros.
A set $S^* = \{u_1^*,\ldots,u_k^*\}$ is called the best
$\mc{F}$-approximation set for $T$ as in \reff{eq:inexactS} if
$S^*$ is the zero set of $f^*$, where
$f^*$ is the minimizer of the optimization
\be \label{eq:bestRep}
\left\{
\begin{array}{rl}
\min  & \mu(f) \coloneqq
           \frac{1}{N}\sum\limits_{i=1}^N  f(v_i)\\
\st &  f \in \mc{F} .
\end{array}
\right.
\ee
\end{defi}

For a given set $S$, if the matrix $G$ is as in \reff{eq:def:G},
then $S$ is the common zero set of the polynomial tuple
$\varphi[G]$, given as in \reff{eq:varphiGa}.
In fact, $\idl( \varphi[G] )$ is the vanishing ideal $I(S)$
and $\varphi[G]$ gives the minimum-degree generating set for $I(S)$.
The relation between $S$ and $\varphi[G]$
is characterized by Theorem~\ref{thm:radical}.
As shown in Proposition~\ref{pro:MG:com},
$\varphi[G]$ has $k$ common zeros
(counting multiplicities and all complex ones)
if and only if the multiplication matrices
$M_{x_1}(G),\ldots,M_{x_n}(G)$ commute with each other.
Moreover, $\varphi[G]$ has $k$ distinct zeros if and only if
$M_{x_1}(G),\ldots,M_{x_n}(G)$ are simultaneously diagonalizable.
So, one can use the matrix $G$ and the polynomial tuple
$\varphi[G]$ to represent the finite set $S$.
As in Section~\ref{sec:genframe},
we consider the family of the following loss functions
\be \label{par:fG}
f_G \, \coloneqq \, \|\varphi[G]\|^2 ,
\ee
parameterized by $G$.
We look for the matrix $G$ such that the average of the values of $f_G$ on $T$
is minimum and $\varphi[G]$ has $k$ common zeros.

In view of the above, we consider the following matrix optimization problem
\begin{equation}
\label{eq:bestRep:rel}
\left\{
\begin{array}{rl}
\min   & \vartheta(G)  \coloneqq
          \frac{1}{N}\sum\limits_{j=1}^N f_G(v_j) \\
\st & [M_{x_i}(G),M_{x_j}(G)]=0 \, (1\le i<j\le n).
\end{array}
\right.
\end{equation}
The value $\varphi[G](v_i)$ is linear in the matrix $G$.
The feasible set of \reff{eq:bestRep:rel} is given by a set of
quadratic equations. The optimization \reff{eq:bestRep:rel}
is the specialization of \reff{eq:bestRep}
such that $\mc{F}$ is the family of loss function $f_G$,
with $\varphi[G]$ having $k$ common zeros.

\subsection{Approximation analysis}
\label{subsc:SolAna}

Suppose $G^*$ is the minimizer of \reff{eq:bestRep:rel}.
Let $S_0$ denote the common zero set of $\varphi[G^*]$.
We can use $S_0$ to approximate the points in $S$.
In some applications, the set $S$ contains only real points
and people like to get a real set approximation for $S$.

First, we study the approximation quality of
the optimization~\reff{eq:bestRep:rel}.
For each $\af \in \mathbb{B}_1$, the sub-Hessian
of the objective $\vartheta(G)$
with respect to the $\af$th column $G(:,\af)$ is the matrix
\[
H \, \coloneqq \, \frac{2}{N}\sum_{j=1}^N
[v_j]_{\mathbb{B}_0} ([v_j]_{\mathbb{B}_0})^{\mt{H}}.
\]
In the above, the superscript $^\mt{H}$
denotes the Hermitian transpose.

\begin{theorem}
\label{thm:real_roots:suf}
Let $T$ be as in \reff{eq:inexactS}
and let $S = \{u_1,\ldots,u_k\}$ be such that the matrix
$X_0$ as in \reff{mat:X0X1} is nonsingular.
Assume there exists $\dt >0$ such that
$H \succeq 2\dt I_k$.
Suppose the set $T$ is such that
\be  \label{eq:real_roots:suf}
T \subseteq S + B(0,\eps), \quad
T \cap  B(u_i,\epsilon)\not = \emptyset \,
(i =1,\ldots, k),
\ee
for some $\eps > 0$. Then, as $\epsilon \to 0$,
the optimizer $G^*$ of \reff{eq:bestRep:rel}
converges to $\hat{G} \coloneqq X_0^{-T} X_1^T$,
and the common zero set $S_0$ of $\varphi[G^*]$ converges to $S$.

In particular, when $S, T \subseteq \re^n$,
if $\epsilon > 0$ is sufficiently small,
the common zero set $S_0$
contains $k$ distinct real points.
\end{theorem}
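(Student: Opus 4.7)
The plan is to exploit the fact that the exact-set matrix $\hat{G} \coloneqq X_0^{-T}X_1^T$ is itself feasible for the optimization \reff{eq:bestRep:rel}, and to combine this with the strong convexity of $\vartheta$ supplied by the hypothesis $H \succeq 2\delta I_k$. Feasibility is immediate from Theorem~\ref{thm:radical} and Proposition~\ref{pro:MG:com}: the common zero set of $\varphi[\hat G]$ is exactly $S$ with $k$ distinct points, so the multiplication matrices $M_{x_1}(\hat G),\ldots,M_{x_n}(\hat G)$ are simultaneously diagonalizable and in particular commute. The first quantitative step is to bound $\vartheta(\hat G)$. Each polynomial $\varphi[\hat G,\af]$ vanishes on $S$ and is smooth, hence Lipschitz on any bounded neighborhood of $S$ with a constant $L = L(\hat G,S)$, so $|\varphi[\hat G,\af](v)| \le L \cdot \mathrm{dist}(v,S)$. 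By the sampling hypothesis \reff{eq:real_roots:suf}, every $v_j \in T$ satisfies $\mathrm{dist}(v_j,S) \le \epsilon$; averaging over $j$ and summing over $\af \in \mathbb{B}_1$ yields $\vartheta(\hat G) \le |\mathbb{B}_1| L^2 \epsilon^2 = O(\epsilon^2)$.

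Next I would use strong convexity to transfer this bound to $G^*$. The key observation is that $\vartheta$ decouples across the columns of $G$: for each $\af \in \mathbb{B}_1$, the contribution $\frac{1}{N}\sum_j \bigl(v_j^\af - G(:,\af)^T [v_j]_{\mathbb{B}_0}\bigr)^2$ is a convex quadratic in $G(:,\af)$ with Hessian exactly $H \succeq 2\delta I_k$. Hence $\vartheta$ has a unique unconstrained global minimizer $G_0$ and satisfies
\begin{equation*}
\vartheta(G) \,\ge\, \vartheta(G_0) + \delta\, \|G - G_0\|_F^2
\quad\text{for every } G.
\end{equation*}
Since $\hat G$ is feasible and $G^*$ is the constrained minimizer, $\vartheta(G^*) \le \vartheta(\hat G) = O(\epsilon^2)$, while $\vartheta(G_0) \ge 0$. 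Applying the strong-convexity inequality at both $G^*$ and $\hat G$ gives $\|G^* - G_0\|_F$ and $\|\hat G - G_0\|_F$ of order $O(\epsilon)$, and the triangle inequality then yields $\|G^* - \hat G\|_F = O(\epsilon)$. In particular $G^* \to \hat G$ as $\epsilon \to 0$.

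To conclude $S_0 \to S$ I would use that each map $G \mapsto M_{x_i}(G)$ is linear, so $M_{x_i}(G^*) \to M_{x_i}(\hat G)$; the unperturbed tuple is simultaneously diagonalizable with $k$ distinct joint eigentuples $u_1,\ldots,u_k$, and by continuity of the joint spectrum the joint eigentuples of the perturbed commuting tuple stay close to these $k$ distinct points, giving $S_0 \to S$. For the realness claim, since $G^* \in \re^{\mathbb{B}_0 \times \mathbb{B}_1}$ the multiplication matrices are real, so $S_0$ is closed under componentwise complex conjugation. For $\epsilon$ small enough, the $k$ elements of $S_0$ are distinct and each lies $O(\epsilon)$-close to a unique $u_i \in \re^n$; if $v^* \in S_0$ is the one near $u_i$, then $\bar v^* \in S_0$ is also $O(\epsilon)$-close to $u_i$, so distinctness forces $v^* = \bar v^* \in \re^n$. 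The main obstacle is making the joint-spectrum perturbation step rigorous at a commuting tuple with simple joint spectrum; the cleanest route is to read the joint eigenvalues off a generic linear combination $\sum_i c_i M_{x_i}(G)$ chosen so that $\sum_i c_i M_{x_i}(\hat G)$ has $k$ distinct real eigenvalues, and then invoke continuity of matrix eigenvalues together with commutativity to recover the componentwise joint eigentuples.
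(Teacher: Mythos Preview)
Your proposal is correct and, in the quantitative step, takes a cleaner route than the paper. For the convergence $G^* \to \hat G$, the paper does not introduce the unconstrained minimizer $G_0$ or invoke strong convexity directly; instead it expands $\vartheta(G^*) = \frac{1}{N}\sum_j \|\varphi[G^*](v_j) - \varphi[\hat G](v_j) + \varphi[\hat G](v_j)\|^2$, applies the triangle and Cauchy--Schwarz inequalities twice, and only at the end uses $H \succeq 2\delta I_k$ to arrive at $\|G^* - \hat G\| \le \sqrt{N/\delta}\,\bigl(\sqrt{\vartheta(G^*)} + \sqrt{\vartheta(\hat G)}\bigr)$. Your strong-convexity argument via $G_0$ gives the sharper bound $\|G^* - \hat G\|_F \le 2\sqrt{\vartheta(\hat G)/\delta}$, free of the extraneous $\sqrt{N}$ factor. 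For the realness assertion, the paper argues directly by the intermediate value theorem on the characteristic polynomial of a generic real combination $\sum_i \xi_i M_{x_i}(G^*)$: comparing sign changes with those of $\sum_i \xi_i M_{x_i}(\hat G)$ forces $k$ distinct real eigenvalues, and the (necessarily real) eigenvectors of this combination simultaneously diagonalize all the $M_{x_i}(G^*)$, making every joint eigentuple real. Your conjugation-symmetry shortcut (the real polynomial system $\varphi[G^*]$ has a zero set closed under complex conjugation, and each zero is pinned near a single real $u_i$) is equally valid and more direct, though it still rests on the same generic-combination perturbation step you correctly flagged as the one point needing care.
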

\begin{proof}
First, we show the convergence $G^* \to \hat{G}$ as $\eps \to 0$.
Since the set $\hat{B} \coloneqq \cup_{i=1}^k B(u_i, 1)$ is compact,
the polynomial function $\varphi[\hat{G}](x)$
is Lipschitz continuous on $\hat{B}$.
There exists $R >0$ such that for all $i\in[k]$ and for all $x\in B(u_i, \eps)$,
\[
\| \varphi[\hat{G}](x)-\varphi[\hat{G}](u_i) \|
\le R \|x-u_i\|\le R \epsilon.
\]
Since $T \subseteq S + B(0,\eps)$,
each $v_j \in T$ belongs to some $B(u_{i_j}, \eps)$ for $i_j\in\{1,\ldots,k\}$.
So the above inequality implies that
(note that each $\varphi[\hat{G}](u_{i_j}) =0$)
\[\begin{aligned}
\vartheta (\hat{G}) &= \frac{1}{N} \sum_{j=1}^N
\|  \varphi[\hat{G}](v_j) \|^2\\
&= \frac{1}{N} \sum_{j=1}^N
\|  \varphi[\hat{G}](v_j) - \varphi[\hat{G}](u_{i_j}) \|^2
\le   (R \epsilon)^2 .
\end{aligned}\]
Since $G^*$ is the minimizer of \reff{eq:bestRep:rel}, we have
\be \label{eq:muGstarbd}
0\le \vartheta(G^*)\le \vartheta(\hat{G})\le (R \epsilon)^2.
\ee
Moreover, it holds that
\[
\begin{aligned}
\vartheta (G^*) &=
\frac{1}{N} \sum_{j=1}^N
\|  \varphi[G^*](v_j) - \varphi[\hat{G}](v_j)
+ \varphi[\hat{G}](v_j) \|^2,\\
& \ge
\frac{1}{N} \sum_{j=1}^N
\Big(
\|  \varphi[G^*](v_j) - \varphi[\hat{G}](v_j)  \| - \|   \varphi[\hat{G}](v_j) \|
\Big)^2\\
& \ge
\frac{1}{N^2} \Big( \sum_{j=1}^N
 \|  \varphi[G^*](v_j) - \varphi[\hat{G}](v_j)  \| -
\sum_{j=1}^N\|   \varphi[\hat{G}](v_j) \|
\Big)^2 .
\end{aligned}
\]
In the above, the first inequality follows from that
$\| a+b \|^2 \ge (\|a\|-\|b\|)^2$ and the second inequality follows from
the Cauchy-Schwartz inequality. Then, we have
\[
\sum_{j=1}^N
\|  \varphi[G^*](v_j) - \varphi[\hat{G}](v_j)  \|
\le
N \sqrt{ \vartheta (G^*) } + \sum_{j=1}^N
 \|   \varphi[\hat{G}](v_j) \|
\]
By the formula of $\varphi[G](x)$ and using Cauchy-Schwartz inequality again, we get
\[
\sum_{j=1}^N
\|  (G^* - \hat{G} )^T [v_j]_{\mathbb{B}_0}  \|
\le  N  \Big( \sqrt{  \vartheta (G^*) }+ \sqrt{ \vartheta (\hat{G}) } \Big).
\]
Since $
\sum_{j=1}^N
\|  (G^* - \hat{G} )^T [v_j]_{\mathbb{B}_0}  \|^2
  \le
\big(
\sum_{j=1}^N
\|  (G^* - \hat{G} )^T [v_j]_{\mathbb{B}_0}  \|
\big)^2$, we have
\[
\frac{1}{N} \sum_{j=1}^N
\|  (G^* - \hat{G} )^T [v_j]_{\mathbb{B}_0}  \|^2
\le  N \Big( \sqrt{  \vartheta (G^*) } + \sqrt{ \vartheta (\hat{G}) } \Big)^2.
\]
By the assumption $H \succeq 2\dt I_k$, the above implies
\[
\| G^* - \hat{G} \|
\le  \sqrt{ \frac{N}{\dt} }
\Big( \sqrt{  \vartheta (G^*) }  + \sqrt{ \vartheta (\hat{G}) } \Big).
\]
Therefore, as $\eps \to 0$, we have
$G^*$ converges to $\hat{G}$.

In the following, we assume that $S, T \subseteq \re^n$.
Since $X_0$ is nonsingular, $S$ has $k$ distinct real points.
Recall the multiplication matrices $M_{x_i}(G^*),M_{x_i}(\hat{G})$
given as in (\ref{def:M_xMatrix}).
Since $G^* \to \hat{G}$, the common zero set of
$\varphi[G^*]$ converges to that of $\varphi[\hat{G}]$.
The zero set of $\varphi[\hat{G}]$ is $S$,
which consists of $k$ distinct real points.
Hence, $\varphi[G^*]$ also has $k$ distinct common zeros
when $\epsilon>0$ is sufficiently small.
Then it remains for us to show that all common zeros of $\varphi[G^*]$ are real.
For a vector $\xi = (\xi_1,\ldots,\xi_n)$,
define the matrices
\[
M_1 = \sum_{i=1}^n \xi_i M_{x_i}(G^*),\quad
M_2 = \sum_{i=1}^n \xi_i M_{x_i}(\hat{G}).
\]
Their characteristic polynomials are
\[
p_1(\lambda) \coloneqq \det(M_1-\lambda I),\quad
p_2(\lambda) \coloneqq \det(M_2-\lambda I).
\]
Fix a generic real value for $\xi$ so that $M_2$ has
$k$ distinct real eigenvalues.
This is because $\varphi[\hat{G}](x)$ has real distinct solutions and
by the Stickelberger's Theorem (see \reff{eq:extractS0}
as in \cite{LaurentSOSmom2009,StickelbergerThm}).
Note that both $p_1(\lmd)$, $p_2(\lmd)$ have degree $k$
and all coefficients are real. The $p_2(\lambda)$ has $k$ distinct real roots.
They are ordered as
\[
\hat{\lambda}_1<\hat{\lambda}_2<\cdots<\hat{\lambda}_k .
\]
We can choose real scalars $b_0,\ldots,b_k$ such that
\[
b_0<\hat{\lambda}_1<b_1<\cdots<b_{k-1}<\hat{\lambda}_k<b_k.
\]
As $\eps \to 0$, the coefficients of $p_1$ converge to those of $p_2$.
So, when $\epsilon > 0$ is small enough,
$p_1(b_j)$ has the same sign as $p_2(b_j)$ does.
Since each $p_2(b_{j-1})p_2(b_j)<0$,  we have
\[
p_1(b_{j-1})p_1(b_j)<0, \quad j = 1, \ldots, k+1.
\]
This implies that $p_1$ has $k$ distinct real roots.
Equivalently, $M_1$ has $k$ distinct real eigenvalues for
$\eps > 0$ sufficiently small. By Proposition~\ref{pro:MG:com},
the multiplication matrices
$M_{x_1}(G^*), \ldots, M_{x_n}(G^*)$
are simultaneously diagonalizable.
Also note that $M_1$ is diagonalizable
and there is a unique real eigenvector (up to scaling)
for each real eigenvalue.
This shows that
$M_{x_1}(G^*), \ldots, M_{x_n}(G^*)$
can be simultaneously diagonalized
by common real eigenvectors.
All $M_{x_1}(G^*), \ldots, M_{x_n}(G^*)$
have real entries,
so they have only real eigenvalues.
Therefore, by Stickelberger's Theorem,
$\varphi[G^*]$ has $k$ distinct real common zeros if
$\epsilon>0$ is sufficiently small.
\end{proof}

\subsection{Loss functions for noisy sets}

When the set $S$ is approximately given by the sampling set $T$,
we can solve \reff{eq:bestRep:rel} for an optimizer matrix $G^*$,
to get loss functions. Let $S_0$ be the common zero set
of the polynomial tuple $\varphi[G^*]$.
If $T$ is far from $S$, $S_0$ may have non-real points.
If real points are wanted, we can choose the real part set
\be \label{eq:bestRep:app}
S^{re}  \, \coloneqq \, \{\mbox{Re}(u): \, u \in S_0 \}.
\ee

First, we show how to compute the common zero set $S_0$.
By Stickelberger's Theorem
(see \cite{LaurentSOSmom2009,StickelbergerThm}),
the set $S_0$ can be expressed as
\begin{equation}
\label{eq:extractS0}
S_0 = \left\{(\lambda_1,\ldots, \lambda_n)\left|\begin{array}{c}
\exists q \in\mathbb{C}^k\setminus \{0\}\text{ such that}\\
M_{x_i}(G^*)q = \lambda_i q ,\, i=1,\ldots, n
\end{array}
\right.
\right\}.
\end{equation}
To get $S_0$ numerically, people often use Schur decompositions. Let
\begin{equation}
\label{def:M(xi)}
M_1 = \xi_1 M_{x_1}(G^*)+\cdots+ \xi_n M_{x_n}(G^*),
\end{equation}
where $\xi_1,\ldots, \xi_n$ are generically chosen scalars.
Then, compute the Schur decomposition for $M_1$:
\be \label{Schur:comp}
Q^{\mathtt{H}} M_1 Q \, = \, P,\quad
Q= \bbm q_1 & \cdots & q_k \ebm .
\ee
In the above, $Q \in \cpx^{k \times k}$ is a unitary matrix
and $P  \in \cpx^{k \times k}$ is upper triangular.
Based on the Schur decomposition \reff{Schur:comp},
the common zeros $\hat{u}_1,\ldots,\hat{u}_k$ of $\varphi[G^*]$ can be given as
\be \label{hat:u_i}
\hat{u}_i \coloneqq
\big(q_i^{\mathtt{H}}M_{x_1}(G^*)q_i,\ldots,q_i^{\mathtt{H}}M_{x_n}(G^*)q_i\big),
\quad i = 1,\ldots,k .
\ee
We refer to \cite{CorlessSchurFactor} for how to use
Schur decompositions to compute common zeros
of zero-dimensional polynomial systems.
For general cases, the set $S_0$ contains $k$ distinct points. It holds when $S,T\subseteq\mathbb{R}^n$ and  the points in $T$ are close to $S$;
see Theorem~\ref{thm:real_roots:suf}.

Based on the above discussions,
we get the following algorithm for obtaining loss functions
when $S$ is approximately given by the sampling set $T$.

\begin{alg}
\label{alg:LF}
For the given set $T$ as in \reff{eq:inexactS}
and the cardinality $k$, do the following:

\begin{itemize}

\item[Step 1]
Solve quadratic optimization (\ref{eq:bestRep:rel})
for the optimizer $G^*$.
	
\item[Step 2]
Compute the common zero set
$S_0 = \{\hat{u}_1, \ldots, \hat{u}_k\}$ of $\varphi[G^*]$.
Let $S^*$ be the set $S_0$ or $S^{re}$ be as in \reff{eq:bestRep:app}
if the real points are wanted.
	
\item[Step 3]
Get a loss function for the set $S^*$,
by the method in Section~\ref{sec:genframe}
or Section~\ref{sec:standard}.

\end{itemize}
\end{alg}

In Step~1, the optimization \reff{eq:bestRep:rel} has
a convex quadratic objective, but its constraints are given by
quadratic equations, in the matrix variable $G$.
So \reff{eq:bestRep:rel} is a quadratically constrained
quadratic program (QCQP).
It can be solved as a polynomial optimization problem
(e.g., by the software \texttt{GloptiPoly 3} \cite{henrion2009gloptipoly}).
The classical nonlinear optimization methods,
(e.g., Gauss-Newton, trust region, and Levenberg-Marquardt type methods)
can also be applied to solve \reff{eq:bestRep:rel}.
We refer to \cite{Kel95,More78,yyx11} for such references.

In Step~2, the common zero set $S_0$ can be computed as in \reff{hat:u_i},
by using the Schur decomposition \reff{Schur:comp}
for the matrix $M_1$ in \reff{def:M(xi)},
for generically chosen scalars $\xi_1,\ldots,\xi_n$.

In Step~3,  there are two options for obtaining loss functions
for the set $S^*$, given in
Sections~\ref{sec:genframe} and \ref{sec:standard} respectively.
One is to choose $f=\|\varphi[G]\|^2$;
the other one is to apply a transformation first
and then choose $f$ similarly.
After the transformation, there are no spurious optimizers
for the loss function.

\section{Numerical Experiments}
\label{sc:num}

In this section, we present numerical experiments for loss functions.
The computation is implemented in {\tt MATLAB} R2018a,
in a Laptop with CPU 8th Generation Intel® Core™ i5-8250U and RAM 16 GB.
The optimization problem (\ref{eq:bestRep:rel})
can be solved by the polynomial optimization software~\texttt{GloptiPoly~3}
(with the SDP solver \texttt{SeDuMi}),
or it can be solved by classical nonlinear optimization solvers
(e.g., the {\tt MATLAB} function \texttt{fmincon}
can be used for convenience).

First, we explore the numerical performance of Algorithm \ref{alg:LF}.
\begin{example}
\label{ex:AlgPer}
Consider the set
\[
S = \Big\{
\bbm 1\\1 \ebm, \bbm 3\\2\ebm, \bbm1.5\\2.5\ebm,
\bbm 2.5\\3 \ebm, \bbm 2\\1.5\ebm, \bbm 3\\1\ebm
\Big\}.
\]
Suppose $T$ is a sampling set of $S$ such that
\[
T\subseteq S+\epsilon[-1,1]^2,\quad \mbox{and}
\]
\[
|T\cap \{u_i+\epsilon[-1,1]^2\}| = N_i\,(i=1,\ldots,6).
\]
We apply Algorithm \ref{alg:LF} for cases
$N_i \in \{50,100\}$ and $\epsilon\in \{0.05,0.1,0.5\}$.
The samples are generated with \texttt{MATLAB} function \texttt{randn}.
We summarize the computational results in
Table~\ref{tab:ex:AlgPer} and Figure~\ref{fig:ex:AlgPer}.
In~Table \ref{tab:ex:AlgPer}, the symbol $S^*$
denotes the computed approximation set as in (\ref{eq:bestRep:app}).
We use the distance
\[
\|S-S^*\|  \, \coloneqq \,  \max_{v\in S^*}
\min_{u_i\in S} \, \|v-u_i\|
\]
to measure the approximation quality of $S^*$ to $S$.
The loss function for $S^*$ is in form of $f=\|\varphi[G]\|^2$,
whose maximum value on $S$ is shown in the fourth column.
In Figure~\ref{fig:ex:AlgPer}, the sampling points in $T$
are plotted in dots, the points in $S$ are plotted
in diamonds and the points in $S^*$ are plotted in squares.
The left column from top to bottom shows cases for $N_i=50$ and
$\epsilon = 0.05, 0.1, 0.5$ respectively.
The right column shows cases for $N_i=100$ accordingly.
\begin{table}[htb]
\caption{The numerical results of Example \ref{ex:AlgPer}}
\label{tab:ex:AlgPer}
\begin{tabular}{cccc}
\specialrule{.1em}{0em}{0.1em}
$N_i$ & $\epsilon$ & $\|S-S^*\|$ & $\max\limits_{u \in S} f(u)$ \\
\hline
\multirow{3}{*}{50} & 0.05 & 0.0064 & $1.27\cdot10^{-4}$
\\ \cline{2-4}
 & 0.1 & 0.0145 & $2.98\cdot10^{-4}$
\\ \cline{2-4}
 & 0.5 & 0.1821 & 0.0862\\
\specialrule{.1em}{0em}{0.1em}
\end{tabular}
\quad
\begin{tabular}{cccc}
\specialrule{.1em}{0em}{0.1em}
$N_i$ & $\epsilon$ & $\|S-S^*\|$ & $\max\limits_{u \in S} f(u)$ \\
\hline
\multirow{3}{*}{100} & 0.05 & 0.0055 & $8.06\cdot10^{-5}$\\ \cline{2-4}
 & 0.1 & 0.0067 & $1.89\cdot10^{-4}$\\ \cline{2-4}
 & 0.5 & 0.1080 & 0.0359\\
\specialrule{.1em}{0em}{0.1em}
\end{tabular}
\end{table}
\begin{figure*}[htb]
 \includegraphics[
 width=.5\textwidth]{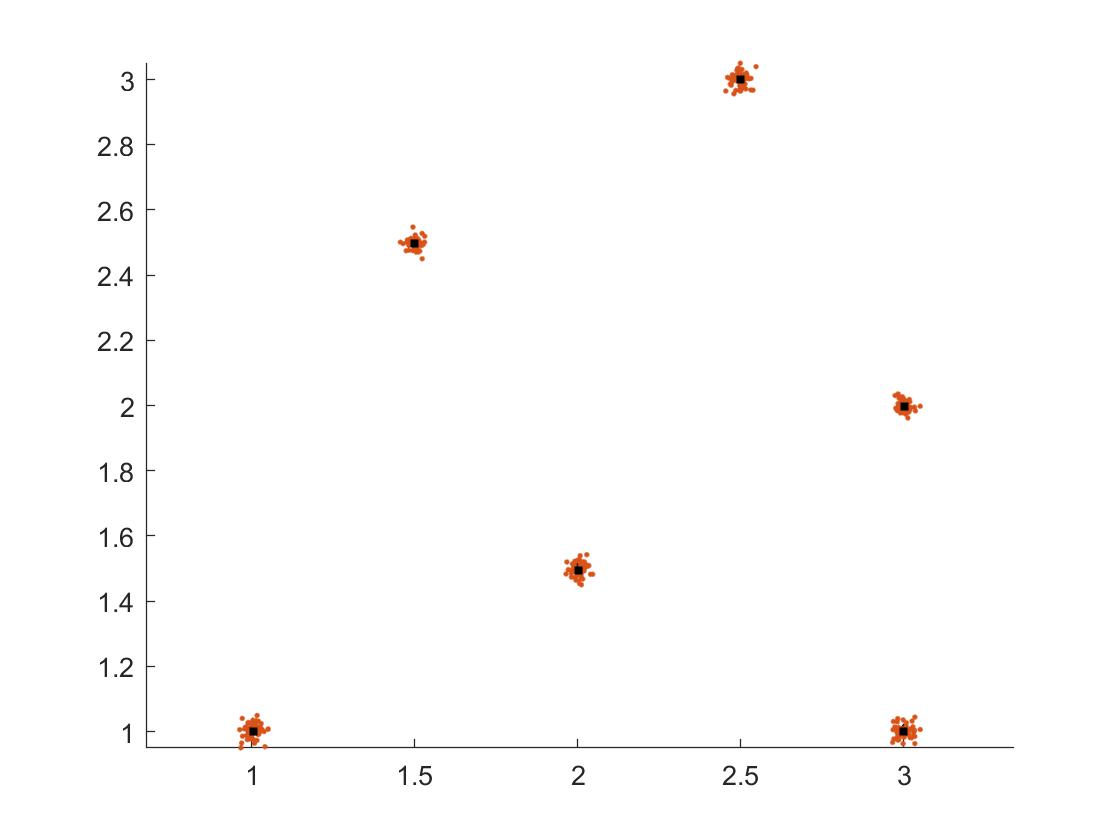}\hfill
 \includegraphics[
 width=.5\textwidth]{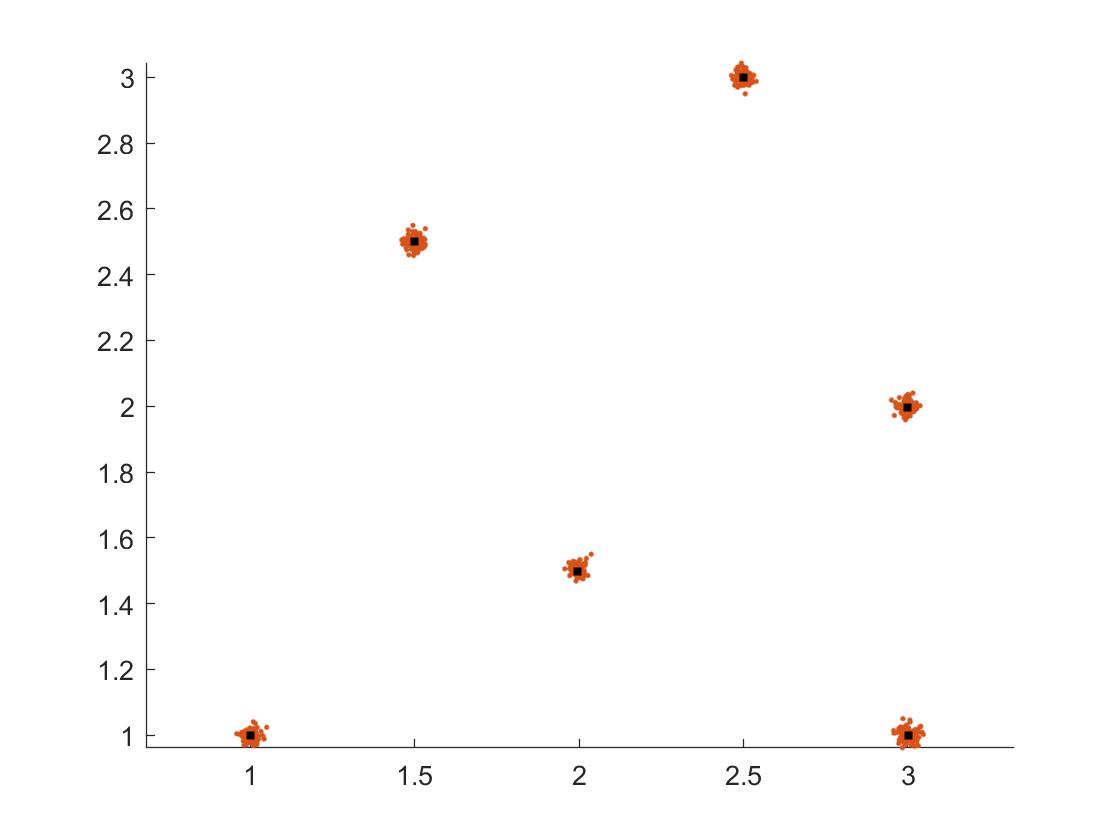}\hfill
 \includegraphics[
 width=.5\textwidth]{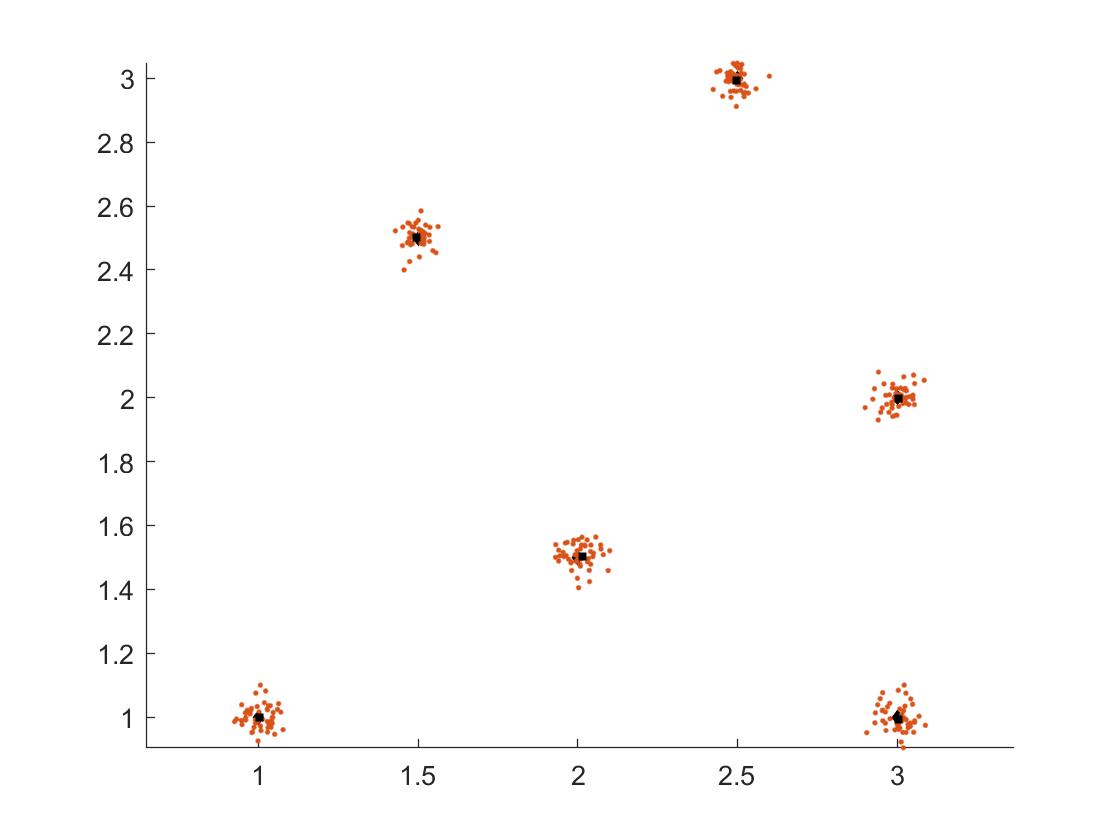}\hfill
 \includegraphics[
 width=.5\textwidth]{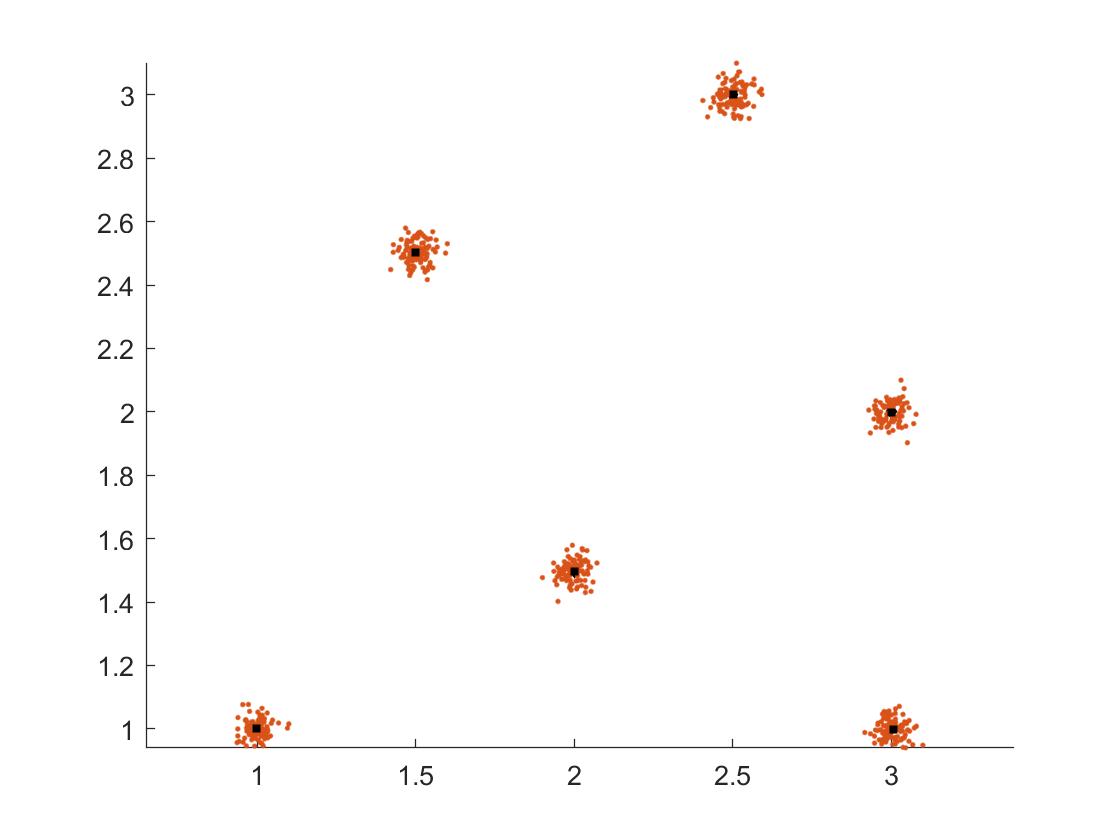}\hfill
 \includegraphics[
 width=.5\textwidth]{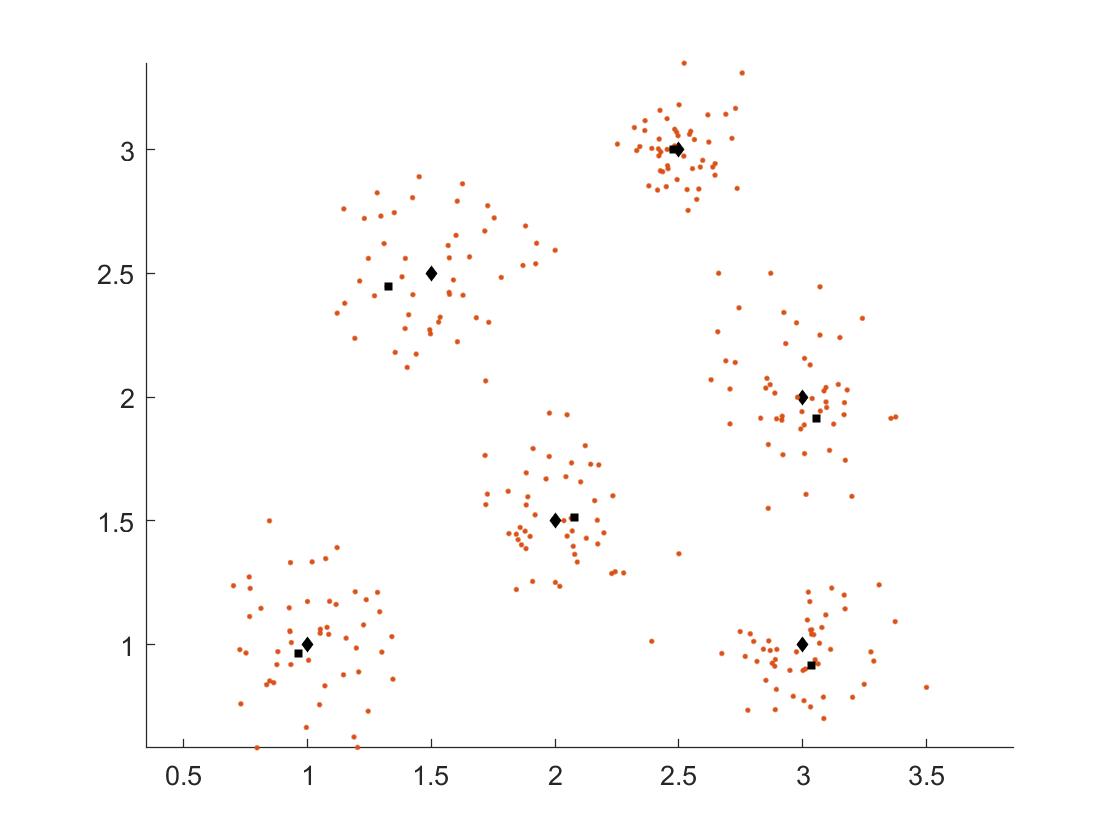}\hfill
  \includegraphics[
 width=.5\textwidth]{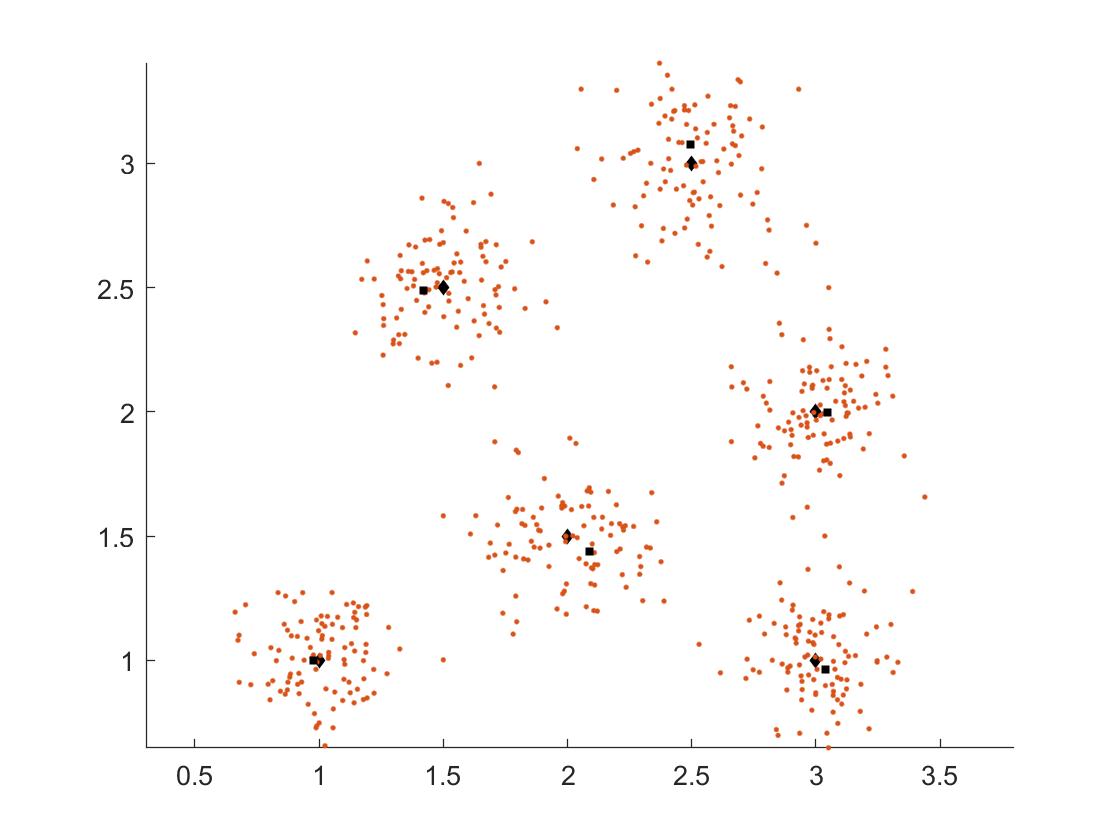}\hfill
 \caption{The performance of Algorithm~\ref{alg:LF} for Example \ref{ex:AlgPer}.
 The left column is for $N_i=50$, and the right column is for $N_i=100$.
 The first row is for $\epsilon = 0.05$, the second row is for $\epsilon=0.1$,
 and the third row is for $\epsilon=0.5$.
 }
 \label{fig:ex:AlgPer}
\end{figure*}
\end{example}

We explore the performance of Algorithm~\ref{alg:LF} for sampling sets $T$
that are not evenly distributed around $S$.
\begin{example}
\label{ex:lf4Tueven}
Let $S$ be the same set given as in Example~\ref{ex:AlgPer}.
Suppose $T$ is a sampling set of $S$ such that for each $i=1,\ldots,6$,
\[
T\subseteq S+a_i [-1,1]^2,\quad |T\cap \{u_i+ a_i [-1,1]^2\}| = b_i,
\]
where $a = (a_1,\ldots, a_6)$ and $b = (b_1,\ldots,b_6)$ are given as
\[
\begin{aligned}
a &= (0.4,\, 0.2,\, 0.6,\, 0.2,\, 0.32,\, 0.4),\\
b &= (50,\, 25,\, 100,\, 30,\, 40,\, 70).
\end{aligned}\]
We apply Algorithm~\ref{alg:LF} for samples generated with the
{\tt MATLAB} function {\tt randn}.
The computational results are summarized as follows.
The computed approximation set is
\[
S^* = \Big\{ \bbm 0.8820\\0.9557\ebm,\bbm 3.0807\\1.7892\ebm,\bbm1.1759\\2.5383\ebm,\bbm 2.3481\\3.0050\ebm,\bbm1.9854\\1.6354\ebm,\bbm 3.0292\\0.8541\ebm\Big\}.
\]
We have that
\[
\|S-S^*\| = 0.3264,\quad \max_{u\in S} f(u) = 0.2147,
\]
where $f(x) = \|\varphi[G](x)\|^2$ is the loss function for $S^*$.
The visualization of Example~\ref{ex:lf4Tueven}
is given in Figure~\ref{fig:lf4Tueven},
where the points in $S$ are plotted
in diamonds and the points in $S^*$ are plotted in squares.
\begin{figure*}[htb]
 \includegraphics[
 width=\textwidth]{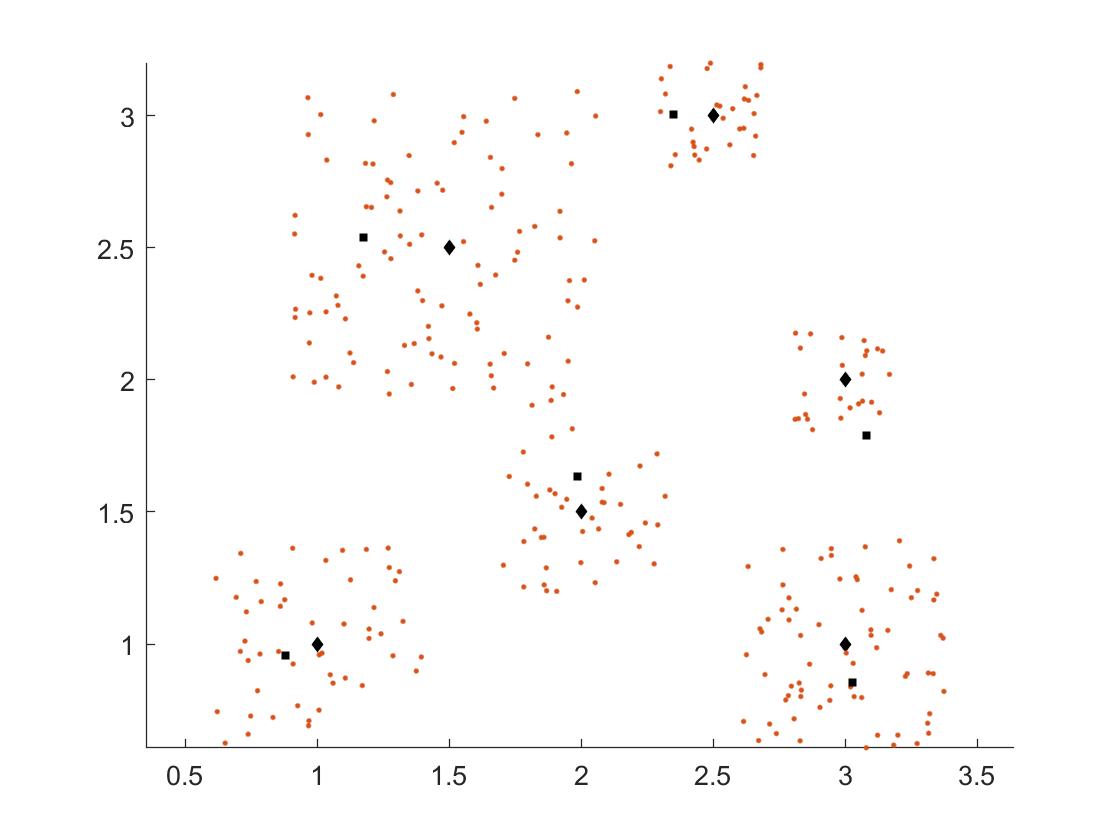}
 \caption{The performance of Algorithm~\ref{alg:LF} for Example~\ref{ex:lf4Tueven}.}
 \label{fig:lf4Tueven}
\end{figure*}
\end{example}

Then, we apply loss functions to study Gaussian mixture models.
For a given sampling set $T$, we compute the finite set $S^*$
and its loss function by Algorithm~\ref{alg:LF}.
The loss function in Section~\ref{sec:standard}
are used, so there are no spurious minimizers.
For a point $v \in T$, apply a nonlinear optimization method
(we use {\tt MATLAB} function {\tt fminunc}) to
minimize $f$ with the starting point $v$.
Once a minimizer $u$ is returned, we cluster $v$
to the group labeled by the point $u\in S^*$.

\begin{example}
\label{ex:GMM}

We use Algorithm \ref{alg:LF} and the transformed simplicial loss functions in
Section~\ref{sec:standard} to learn Gaussian mixture models (GMMs).
Each GMM has parameters $(w_i,\mu_i,\Sigma_i)$,  $i=1,\ldots, k$,
where each weight $w_i >0$, the mean vector $\mu_i \in \re^n$
and the covariance matrix $\Sigma_i \in \mc{S}_{++}^n$
(the cone of real symmetric positive definite $n$-by-$n$ matrices),
such that $w_1+\cdots+w_k=1$. We explore the performance of transformed
simplicial loss functions for two cases
\[
\text{I)}:\, n = 4,\, k\in\{4,5\},\quad
\text{II)}:\, n=5,\, k\in\{3,4\}.
\]
In particular, we compare the results for diagonal Gaussian mixture models
(each $\Sigma_i$ is diagonal) and non-diagonal Gaussian mixture models
(each $\Sigma_i$ is non-diagonal).
For each instance, 1000 samples are generated.
The weights $w_1,\ldots,w_k$ are also computed from sampling:
we first use the {\tt MATLAB} command \texttt{randi}
getting 1000 integers from $[k]$, and then counting each $w_i$
based on the occurrence probability of $i\in[k]$.
We generate each covariance matrix as $\Sigma_i=R^TR$,
for some randomly generated square matrix $R$.
The clustering accuracy rate counts the percentage of
samples belonging to the correct cluster.
We run $10$ instances for each case and give
the average CPU time (in seconds)
consumed by the method and the accuracy rate for all instances.
The computational results are reported in Table~\ref{tab:exmGMM2}.
Algorithm \ref{alg:LF} together with transformed simplicial loss functions
has good performance for both diagonal and non-diagonal Gaussian mixture models.
The clustering accuracy rate is higher for non-diagonal Gaussian mixtures
than that for diagonal ones. In particular, for $(n, k)=(4, 5)$,
the clustering accuracy rate can be as high as 98.92\%.
\begin{table}[htb]
\caption{The computational results for Example~\ref{ex:GMM}.}
\label{tab:exmGMM2}
\centering
\begin{tabular}{cccccc}
\specialrule{.1em}{0em}{0.1em}
& & \multicolumn{2}{c}{Accuracy Rate} & \multicolumn{2}{c}{CPU Time}\\
\hline
$n$ & $k$ &diagonal & non-diagonal & diagonal & non-diagonal\\
\hline
\multirow{2}{*}{4} & 4 & 77.66\% & 85.34\% &  66.14 & 68.28\\
& 5 & 88.73\%  & 98.92\% & 93.32 &  90.76\\
\hline
\multirow{2}{*}{5} & 3 & 80.93\%  & 84.04\% & 73.35 &  75.25\\
& 4 & 82.40\%   &  89.58\%   & 132.88 & 129.19\\
\specialrule{.1em}{0em}{0.1em}
\end{tabular}
\end{table}

\end{example}

\section{Conclusions}

This paper studies loss functions for finite sets.
We give a framework for loss functions.
For a generic finite set $S$,
we show that $S$ can be equivalently given as
the zero set of SOS polynomials with minimum degrees.
When $S$ is the vertex set of a standard simplex,
we show that the given loss function
has no spurious minimizers. For general finite sets,
after a transformation, we can get similar loss functions
that have no spurious minimizers.
When $S$ is approximately given by a sampling set $T$,
we show how to get loss functions for $S$
based on sampling points in $T$.
This can be done by solving a quadratic optimization problem.
Some examples are given
to show the efficiency of the proposed loss functions.

\medskip \noindent
{\bf Acknowledgement}
The authors are partially supported by the NSF grant
DMS-2110780.

\end{document}